\newtheorem{thm}{Theorem}[section]
\newtheorem{prop}[thm]{Proposition}
\newtheorem{cor}[thm]{Corollary}
\theoremstyle{definition}
\numberwithin{equation}{section}
\newtheorem*{mthm}{Main Theorem}
\newtheorem{fg*}{figure}
\newtheorem{remr}{Remark}
\newtheorem{exe}{Example}
\newtheorem*{thmz}{Theorem Z}
\newtheorem*{thmk}{Theorem KRL}
\newtheorem{thmc}{Conjecture }
\newcommand{\C}{{\mathbb C}}
\newcommand{\D}{{\mathbb D}}
\newcommand{\ind}{\int_\D}
\newcommand{\diri}{\mathfrak{D}}
\begin{document}

\title[composition operators with closed ranges  ]
{composition operators with closed range on the Dirichlet space}

\author{Guangfu Cao}
\address{Cao: School of Mathematics and Information Science,
Guangzhou University, Guangzhou 510006, China.}
\email{guangfucao@163.com}

\author{Li He*}
\address{He: School of Mathematics and Information Science,
Guangzhou University, Guangzhou 510006, China.}
\email{helichangsha1986@163.com}

\thanks{G.C. was supported by NNSF of China (Grant No. 12071155), and
L.H. was supported by NNSF of China (Grant No. 11871170).}
\thanks{*Corresponding author, email: helichangsha1986@163.com}
\keywords{Dirichlet space, composition operator, closed range, counting function.}
\subjclass[2010]{47B33, 47A53}

\begin{abstract}
It is well known that the composition operator on Hardy or Bergman space has a closed range  if and only if its Navanlinna counting function induces a reverse Carleson measure. Similar conclusion is not available on the Dirichlet space. Specifically, the reverse Carleson measure is not enough to ensure that the range of the corresponding composition operator is closed.   However,  under certain assumptions, we in this paper set the necessary and sufficient condition for a composition operator on the Dirichlet space to have  closed range. \end{abstract}

\maketitle
\section{Introduction}
  Let  $\D$ be the unit disc in the complex plane $\C, $ $\mathbb{T} $ be the unit circle, and let $dA$ denote
area measure on $\D$. The Dirichlet space, denoted by $\diri (\D)$, consists of analytic
functions $f$ on $\D$ such that
$$\|f\|^2=|f(0)|^2+\ind|f'(z)|^2\,dA(z)<\infty.$$
Obviously, $\diri (\D)$ is a Hilbert space with the inner product 
$$\langle f,g\rangle=f(0)\overline{g(0)}+\ind f'(z)\overline{g'(z)}\,dA(z).$$
 Set
$e_0(z)=1,\quad e_n(z)=\frac1{\sqrt\pi}\frac{z^n}{\sqrt n},\qquad (n=1,2,,\cdots),$ then $\{e_n\}$ is the standard orthonormal basis of $\diri (\D).$ 
It follows that the reproducing kernel of  $\diri (\D)$ is
$$K(z,w)=\sum_{n=0}^\infty e_n(z)\overline{e_n(w)}=1+\frac1{\pi}
\log\frac1{1-z\overline w}.$$
  Given an analytic self-map $\varphi:\D\to\D$, for any $w\in \varphi (\D),$  write $
 n_{\varphi}(w)
 $ is the cardinality of the set $\varphi^{-1}(w),$   $N_{\varphi }(w)$ is the (Nevanlinna) counting number function of $\varphi ,$ and
$$
 \tau_{\varphi}(w)=\frac{N_{\varphi }(w)}{log\frac{1}{|w|}}.
 $$
 
 In the past few decades, the closed-range composition operators on various  function spaces  has attracted wide attention,  we refer to \cite{AF, CTW, GYZ, GZZ, KW, KK, L, L1, M, P, Y, Zor, Zo, Zo1} and the references therein.
 In 1974 Cima, Thompson, and Wogen determined when the composition operators on Hardy space $H^2$ have  closed-ranges by utilizing  the boundary behaviour of the inducing maps (see \cite{CTW}). In the end of their paper,  they posed  the problem of characterizing the closed-range composition operators using the properties of the range of the inducing maps on the unit disc rather than the properties of the boundary.  Zorboska \cite{Zo}  gave an answer  to this question by using the Nevanlinna counting function and Luecking's measure theoretic results on inequalities on Bergman spaces. He proved the following
  \begin{thmz}(\cite{Zo}). Let $\varphi $ be an  analytic self-mapping of $\D.$  Assume that $C_{\varphi }$ is a bounded composition operator on the Hardy space $
 H^2(\D)$ defined as
 $$
 C_{\varphi }f(z)=(f\circ \varphi )(z), \hskip 5mm\forall \hskip 2mm f\in H^2(\D).
 $$
Then  $C_{\varphi }$ has  closed range if and
only if there exists a positive constant $c > 0$ such that the set $G_c = \{z \in \D: \tau_{\varphi }(z) > c\}$ satisfies the condition: 

There exists a constant $\delta > 0$ such that 
$$
A(G_c \cap S(\zeta , r))>\delta A(\D \cap S(\zeta ,r))\hskip 10mm (*)
$$
for all $\zeta \in \mathbb{T}$ and $r > 0,$ where $S(\zeta, r)=\{z\in \D:|z-\zeta|<r\}.$ 
 \end{thmz}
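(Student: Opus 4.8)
The plan is to recast the closed--range property as a reverse (``sampling'') inequality for a weighted Bergman space and then to invoke Luecking's characterization of dominating sets. Assume $\varphi$ is non--constant, so that $C_\varphi$ is injective on $H^2(\D)$; then $C_\varphi$ has closed range if and only if it is bounded below, i.e. there is $C>0$ with $\|f\|_{H^2}^2\le C\|C_\varphi f\|_{H^2}^2$ for all $f\in H^2(\D)$. First I would invoke the Littlewood--Paley identity together with the non--univalent change of variables,
$$
\|f\|_{H^2}^2=|f(0)|^2+2\ind|f'(w)|^2\log\tfrac1{|w|}\,dA(w),\qquad
\|C_\varphi f\|_{H^2}^2=|f(\varphi(0))|^2+2\ind|f'(w)|^2 N_\varphi(w)\,dA(w),
$$
with area measure suitably normalized. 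Because $f\mapsto f'$ carries $H^2(\D)$ onto the weighted Bergman space $\mathcal B$ with $\|g\|_{\mathcal B}^2\asymp\ind|g|^2\log\tfrac1{|w|}\,dA\asymp\ind|g|^2(1-|w|^2)\,dA$, and because the point evaluations are comparable to $\|g\|_{\mathcal B}$ at points of a fixed compact set (using $f(0)-f(\varphi(0))=\int_0^1 f'(t\varphi(0))\varphi(0)\,dt$ and the pointwise Bergman estimate), the closed--range condition is \emph{equivalent} to the existence of $C>0$ with $\ind|g|^2\log\tfrac1{|w|}\,dA\le C\ind|g|^2 N_\varphi\,dA$ for all $g\in\mathcal B$. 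Writing $d\nu=\log\tfrac1{|w|}\,dA$ and $N_\varphi=\tau_\varphi\log\tfrac1{|w|}$, this reads $\int_\D|g|^2\,d\nu\le C\int_\D|g|^2\tau_\varphi\,d\nu$ for all $g\in\mathcal B$. The decisive external input is Luecking's theorem: a measurable set $G\subset\D$ is \emph{dominating} for $\mathcal B$, i.e. $\int_\D|g|^2\,d\nu\lesssim\int_G|g|^2\,d\nu$ for all $g\in\mathcal B$, if and only if there is $\delta>0$ with $A(G\cap S(\zeta,r))\ge\delta\,A(\D\cap S(\zeta,r))$ for all $\zeta\in\mathbb T$, $r>0$ --- which is exactly condition $(*)$ (the weight $\log\tfrac1{|w|}$ is comparable to a constant on each Carleson box, so it cancels in the density quotient).

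\emph{Sufficiency.} Suppose $G_c$ satisfies $(*)$ for some $c>0$. By Luecking's theorem $G_c$ is dominating, so $\int_\D|g|^2\,d\nu\le C_0\int_{G_c}|g|^2\,d\nu$ for all $g\in\mathcal B$. Since $\tau_\varphi>c$ on $G_c$, one gets $\int_{G_c}|g|^2\,d\nu\le\tfrac1c\int_{G_c}|g|^2\tau_\varphi\,d\nu\le\tfrac1c\int_\D|g|^2\tau_\varphi\,d\nu$, hence $\int_\D|g|^2\,d\nu\le\tfrac{C_0}{c}\int_\D|g|^2N_\varphi\,dA$ for all $g\in\mathcal B$. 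Translating back through the identities above yields $\|f\|_{H^2}^2\lesssim\|C_\varphi f\|_{H^2}^2$, so $C_\varphi$ has closed range.

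\emph{Necessity.} Suppose $C_\varphi$ has closed range, so $\int_\D|g|^2\,d\nu\le C\int_\D|g|^2\tau_\varphi\,d\nu$ for all $g\in\mathcal B$, equivalently $\int_\D|g|^2(1-C\tau_\varphi)\,d\nu\le0$. Here one uses Littlewood's inequality $N_\varphi(w)\le\log\bigl|\tfrac{1-\overline{\varphi(0)}w}{w-\varphi(0)}\bigr|$, which gives $\tau_\varphi\in L^\infty(\D)$. Fix $\epsilon\in(0,\tfrac1{2C})$ and put $E=\{w:\tau_\varphi(w)<\tfrac1C-\epsilon\}$; on $E$ one has $1-C\tau_\varphi>C\epsilon>0$, so
$$
C\epsilon\int_{E}|g|^2\,d\nu\le\int_{E}|g|^2(1-C\tau_\varphi)\,d\nu\le\int_{\D\setminus E}|g|^2(C\tau_\varphi-1)\,d\nu\le C\|\tau_\varphi\|_\infty\int_{\D\setminus E}|g|^2\,d\nu,
$$
whence $\int_\D|g|^2\,d\nu\le(1+\tfrac{\|\tau_\varphi\|_\infty}{\epsilon})\int_{\D\setminus E}|g|^2\,d\nu$ for all $g\in\mathcal B$. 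Thus $\D\setminus E=\{w:\tau_\varphi(w)\ge\tfrac1C-\epsilon\}$ is dominating, so by Luecking's theorem it satisfies $(*)$; since $\D\setminus E\subset G_c$ for $c=\tfrac1C-2\epsilon>0$, the larger set $G_c$ satisfies $(*)$ as well.

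The substantive difficulty is concentrated in two places. The first is the reduction of the whole problem to the single weighted Bergman inequality $\int|g|^2\,d\nu\le C\int|g|^2\tau_\varphi\,d\nu$: this requires the norm equivalence $\|f\|_{H^2}\asymp|f(0)|+\|f'\|_{\mathcal B}$, the change--of--variables formula for $N_\varphi$, and the careful absorption of the two point--evaluation terms so that the equivalences are genuinely two--sided. The second, and really the heart of the matter, is Luecking's dominating--set theorem for Bergman spaces together with the verification that its geometric hypothesis coincides with $(*)$ (including the role of the weight and of the admissible family of boxes); once that black box and the elementary rearrangement using $\tau_\varphi\in L^\infty$ are in hand, both implications follow with no further work.
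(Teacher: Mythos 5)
The paper does not actually prove Theorem Z --- it is quoted verbatim from Zorboska \cite{Zo} --- so your attempt can only be measured against the standard argument there. Your strategy is in fact the same one: use the Littlewood--Paley identity and the non-univalent change of variables to convert ``$C_\varphi$ bounded below'' into a sampling inequality $\int_{\D}|g|^2\log\frac1{|w|}\,dA\lesssim\int_{\D}|g|^2N_\varphi\,dA$ on the weighted Bergman space, and then apply Luecking's dominating-set theorem. The sufficiency half of your argument is essentially correct. But the necessity half has a genuine gap: the claim that Littlewood's inequality gives $\tau_\varphi\in L^\infty(\D)$ is false whenever $\varphi(0)\neq 0$. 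The bound $N_\varphi(w)\le\log\bigl|\tfrac{1-\overline{\varphi(0)}w}{w-\varphi(0)}\bigr|$ has a logarithmic singularity at $w=\varphi(0)$, and $N_\varphi$ itself genuinely blows up there (for univalent $\varphi$ one has $N_\varphi(w)=\log(1/|\varphi^{-1}(w)|)\to\infty$ as $w\to\varphi(0)$), while $\log\frac1{|w|}$ stays bounded near an interior point $\varphi(0)\neq 0$; hence $\tau_\varphi$ is unbounded near $\varphi(0)$ and your final estimate $\int_{\D\setminus E}|g|^2(C\tau_\varphi-1)\,d\nu\le C\|\tau_\varphi\|_\infty\int_{\D\setminus E}|g|^2\,d\nu$ carries an infinite constant. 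The standard repair, and what Zorboska actually does, is to run the whole argument on an annulus $\{s<|w|<1\}$ with $s>|\varphi(0)|$: there Littlewood's inequality does give a uniform bound on $\tau_\varphi$, and neither the Littlewood--Paley norm comparison nor the dominating-set property is affected by deleting a compact subset of $\D$ (the integral of $|g|^2$ over $\{|w|\le s\}$ is controlled, via subharmonicity, by the integral over a slightly larger annulus). Without this reduction the necessity direction does not close.

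Two smaller points. First, your parenthetical claim that the weight $\log\frac1{|w|}$ ``is comparable to a constant on each Carleson box'' is not right: on $S(\zeta,r)$ it ranges from essentially $0$ up to roughly $r$. It is comparable to a constant on Bergman-metric disks $D(z,r)$, and the passage from the Bergman-disk density condition to the Carleson-box condition $(*)$ is a separate (standard, but not free) geometric equivalence that must be invoked explicitly. Second, in reducing closed range to the Bergman inequality, the two-sidedness is most cleanly obtained by choosing the antiderivative of $g$ that vanishes at $\varphi(0)$, so the term $|f(\varphi(0))|^2$ disappears outright; an estimate $|f(\varphi(0))|^2\lesssim\|f'\|^2$ with an uncontrolled constant cannot simply be ``absorbed'' in the necessity direction. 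Both issues are repairable, but as written they are gaps rather than omissions of routine detail.
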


In addition, Zorboska constructed a counterexample which shows that the range of $C_{\varphi }$ may not be closed even if $\varphi (\D)=\D.$  It seems a little strange  since   $C_{\varphi }$ is invertible on $H^{2}(\D)$ if $\varphi \in Aut(\D).$

For the case of the (weighted) Bergman space, one  also obtained the necessary and sufficient conditions for the closed range  composition operators (see \cite{AF, L}). However, in the case of the Dirichlet space, it  seems rather difficult.

Recall that the pseudo-hyperbolic metric on $\D$  is defined by 
 $$
 \rho (z,w)=|\frac{z-w}{1-\bar{z}w}|, \hskip 5mm z, w\in \D,
 $$
 and the Bergman metric is defined by
 $$
 \beta(z, w)=\frac{1}{2}log\frac{1+\rho(z,w)}{1-\rho(z,w)}, \hskip 5mm z, w\in \D.
 $$
  For $0 < \eta  < 1,$ $z\in \D$ and $r>0,$ 
 write 
 $$
 D_{\eta }(z)=\{w\in \D|\rho (z,w)< \eta \}
 $$
and
 $$
 D(z, r)=\{w\in \D|\beta(z,w)< r\}.
 $$
 For $G\subset \mathbb{C},$ $A(G)$ denotes the Lebesgue measure of $G.$ Then for any fixed positive $ r,$ we have
 $$
 A(D(z, r))\sim (1-|z|^{2})^{2}.
 $$
 In fact,  $D(z, r)$ is the Euclidean disk with Euclidean center $(1-s^2)z/(1-s^2|z|^2)$ and Euclidean radius $(1-|z|^2)s/(1-s^2|z|^2),$ where $s=tanh(r)\in (0,1).$
 One may consult the book  \cite{Z} by Kehe Zhu for details. 
 
Luecking \cite{L3} proved that a necessary condition for a composition operator $C_{\varphi }$ on the Dirichlet space $\mathcal{D}(\D)$ to have closed range is that  $n_{\varphi}dA$ must be a reverse Carleson measure, that is, there is a $\delta >0$ and $\eta \in(0, 1)$ such that
 $$
 \int_{\varphi(\D )\cap D_\eta(z)}n_{\varphi }(w)dA(w)> \delta A(D_{\eta }(z ))
 $$
  for all $z \in \mathbb{D}, $ or equivalently,
   there is a $\delta >0$ such that
 $$
 \int_{\varphi(\D )\cap S(\zeta ,r )}n_{\varphi }(w)dA(w)> \delta A(S(\zeta ,r ))
 $$
 for all $\zeta \in \mathbb{T}.$ 
 Note that
 $$
 D(z,r)=\{w\in \D: \beta (z, w)<r\}=D_{\eta }(z)=\{w\in \D: \rho (z, w)<\eta \}
 $$
 for $\eta =\frac{e^{2r}-1}{e^{2r}+1},$ and then that the following statements are equivalent:
 
 (1) There is a $\delta >0$ and $r>0$ such that
  $$
 \int_{\varphi(\D )\cap D(z ,r )}n_{\varphi }(w)dA(w)> \delta A(D(z ,r ))
 $$
 for all $z \in \mathbb{D}.$ 

(2) There is a $\delta >0$ and $\eta \in(0, 1)$ such that
 $$
 \int_{\varphi(\D )\cap D_{\eta }(z )}n_{\varphi }(w)dA(w)> \delta A(D_{\eta}(z))
 $$
  for all $z \in \mathbb{D}.$ 
  
In 1999 Luecking \cite{L1} constructed a self-mapping  $\varphi $ on $\D$  such that $n_{\varphi}dA$ is a reverse Carleson measure, but $C_{\varphi }$ has not closed range.  Unfortunately, there is no  necessary and sufficient condition for a composition operator to have  closed range. In contrast to Hardy space, the composition operator with surjective symbol must have closed range on Dirichlet space. In fact, if $\varphi $ is an analytic self-mapping  on $\D$, and  $\varphi (\D)=\D,$ then $n_{\varphi}(w)\geq 1$ for any $w\in \D,$ and 
\begin{eqnarray*}
\|C_{\varphi }f\|^2_{\mathcal{D}}&=&\int_{\D}|(f\circ \varphi )'(z)|^2dA(z)\\
&=&\int_{\varphi (\D)}|f'(w)|^2n_{\varphi}(w)dA(w)\\
&=&\int_{\D}|f'(w)|^2n_{\varphi}(w)dA(w)\\
&\geq &\int_{\D}|f'(w)|^2dA(w)=\|f\|^{2}_{\mathcal{D}}.
\end{eqnarray*}
This implies that $C_{\varphi }$ has  closed range.

The discussions above means that even if the measure  $n_{\varphi}(w)dA(w)$ induced by counting function $n_{\varphi}(w)$ is a  reverse  Carleson measure, it also cannot guarantee that the composition operator $C_{\varphi}$ has  closed range.  However, if $\varphi$  is surjective, then the range of $C_{\varphi}$ must be closed. This phenomenon is not  surprising, since the counting function $n_{\varphi}(w)$  may be unbounded when $w$ approaches  the boundary of the domain, although the image of the symbol map cannot fill  the neighborhood of $\mathbb{T},$  the counting function may introduce a reverse Carleson measure. It can show up from the counterexample of Luecking in \cite{Zo}. This shows that the reverse Carleson measure is not  a proper condition for composition operator with closed range on  Dirichlet space $\mathcal{D}(\D)$ in some extreme cases.

%  \begin{lem1}(\cite{Z}(Lemma 4.10)
%Suppose $0 < \delta < 1$ and $\{a_{k}\}$ is an $ \delta$-lattice in the Bergman metric. For each $k$ there exists a measurable set $D_k$ with the
%following properties:

%(a) $D(a_{k}, \delta/4)\subset  D_k \subset  D(a_k,  \delta) $ for all $n > 1.$

%(b) $D_{i}\cap D_{j}=\phi $ if $i\neq j.$

%(c) $D_{1}\cup D_{2}\cup \ldots = \D.$
%\end{lem1}
 %By Lemma 2, we know easily that  the inequality
% $$
% A(\varphi(\D )\cap D(\zeta ,r ))> \delta A(\D\cap D(\zeta ,r ))= \delta A(D(\zeta ,r ))
% $$
%  for all $\zeta \in \mathbb{D}$ and  $0<r <1$
% means the inequality
% $$
 %A(\varphi(\D )\cap S(\zeta ,r ))> \delta_{1} A(\D\cap S(\zeta ,r ))
 %$$
%  for all $\zeta \in \mathbb{T}$ and  $0<r <2.$ Here $\delta $ and $\delta_1$ may be different.

Notice that 
$$
\int_{\varphi (\D)\cap  D(z ,r)}n_{\varphi}(w)A(w)\geq \int_{\varphi (\D)\cap  D(z ,r)}A(w)=A(\varphi (\D)\cap  D(z ,r)),
$$
then the inequality
$$
A(\varphi (\D)\cap  D(z ,r))\geq \delta A(D(z ,r))
$$
for all $z \in \mathbb{D}$ and some $r>0$ implies that $n_{\varphi}(w)A(w)$ is a reverse Carleson measure, but the contrary may not be true. In other words, the inequality 
$$
A(\varphi (\D)\cap  D(z ,r))\geq \delta A(D(z ,r))
$$
is stronger than that $n_{\varphi}(w)dA(w)$ is a reverse Carleson measure. Further, they are equivalent under the assumption that $n_{\varphi }(w)$ is bounded. In fact,
if $n_{\varphi}(w)$ is bounded on $\D,$ then for any $z \in \mathbb{D}$ and $r>0,$ there is a constant $M>0$ such that
\begin{eqnarray*}
MA(\varphi(\D)\cap D(z ,r))&\geq &\int_{\varphi (\D)\cap  D(z ,r)}n_{\varphi}(w)dA(w)\\
&\geq & \int_{\varphi (\D)\cap  D(z ,r)}dA(w)\\
&=&A(\varphi(\D)\cap D(z ,r)).
\end{eqnarray*}
In this case, the inequality
$$
A(\varphi (\D)\cap  D(z ,r))\geq \delta A(D(z ,r))
$$ is equivalent to that $n_{\varphi}(w)dA(w)$ is a reverse Carleson measure. 
It seems that a proper condition for composition operator with closed range is:

There exists a constant $\delta > 0$ and $r>0$ such that 
$$
A(\varphi(\D) \cap D(z , r))\geq \delta A(D(z ,r))\hskip 10mm (**) 
$$
for all $z \in \mathbb{D}.$  

Under some natural assumptions,   $(**)$ is indeed the necessary and sufficient condition for composition operators to have  closed range.
Our main result is as follow
% \begin{thma}Let $\varphi $ be an  analytic self-mapping function of $\D$  and $A(\varphi (\D))=A(\overline{\varphi(\D)}).$ If $C_{\varphi }$ is a bounded composition operator on $
 %\mathcal D(\D),$  then the following statements are equivalent:
 %\begin{enumerate}
%\item[{\bf(a)}] 
% $R( C_{\varphi })$  is closed.
 % \item[{\bf(b)}] There is a $\delta >0$ and $r>0$ such that
 %$$
 % A(\varphi(\D)\cap D(z,r) )\geq \delta A(D(z,r))
%$$
 %for all  $z\in \D$.
 %\item[{\bf(c)}]  There is a constant $r\in (0, 1)$ such that  
  %$$
% \D-\overline{\D_r}\subset \overline{\varphi(\D)}.
 %$$
 %\item[{\bf(d)}] $n_{\varphi }(w)dA(w)$ is a reverse Carleson measure.
  %\end{enumerate}
 %\end{thma}
 \begin{mthm}Let $\varphi $ be an  analytic self-mapping function of $\D.$  If
$$
lim_{k\rightarrow \infty }sup_{f\in (\mathcal{D}(\D))_1}\int_{\varphi(\D)\cap \{w\in \D:n_{\varphi }(w)>k\}}|f'(w)|^2n_{\varphi }(w)dA(w)=0,
$$
where $(\mathcal{D}(\D))_1$ denotes the unit sphere of $\mathcal{D}(\D),$ then the following statements are equivalent:
 \begin{enumerate}
\item[{\bf(a)}] 
 $R( C_{\varphi })$  is closed on $\mathcal{D}(\D).$
  \item[{\bf(b)}] There is a $\delta >0$ and $r>0$ such that
 $$
  A(\varphi(\D)\cap D(z,r) )\geq \delta A(D(z,r))
$$
 for all  $z\in \D$.
 \item[{\bf(c)}]  $n_{\varphi }(w)dA(w)$ is a reverse Carleson measure.
  \end{enumerate}
\end{mthm}

 \section{Composition operator with closed range}

% \begin{lem}(\cite{Z}(Proposition 4.13)
 %Suppose $p>0,$ and $ \delta >0.$ Then there exists a positive constant $C$ such that 
 %$$
 %|f(z)|^{p}\leq \frac{C}{(1-|z|^{2})^{2}}\int_{D(z, \delta)}|f(w)|^{p}dA(w)
 %$$
 %for all $f\in H(\D)$ and all $z\in \D,$ where $H(\D)$ denotes the space of analytic functions on $\D.$
 %\end{lem}
 
 \begin{prop}Let $\varphi $ be an  analytic self-mapping function of $\D.$  Assume $C_{\varphi }$ is a bounded composition operator on $
 \mathcal D(\D)$ defined as
 $$
 C_{\varphi }f(z)=(f\circ \varphi )(z), \hskip 5mm\forall \hskip 2mm f\in \mathcal D(\D).
 $$
 If $R( C_{\varphi })=\{(f\circ \varphi |f\in \mathcal{D}(\D)\}$, the range of $ C_{\varphi },$ is closed, then  for any $\zeta \in \mathbb{T}$ and any $r >0,$
 $$
 S(\zeta ,r )\cap \varphi (\D)\neq \phi .
 $$
 Equivalently, $\mathbb{T} \subset \overline{\varphi (\D)}.$
  \end{prop}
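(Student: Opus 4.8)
The plan is to prove the contrapositive: if there exist $\zeta\in\mathbb{T}$ and $r_0>0$ with $S(\zeta,r_0)\cap\varphi(\D)=\emptyset$, then $R(C_\varphi)$ is not closed. Two preliminary reductions come first. Since $\varphi$ is a non-constant analytic self-map of $\D$, the set $\varphi(\D)$ is open, so $f\circ\varphi\equiv 0$ forces $f\equiv 0$ by the identity theorem; hence $C_\varphi$ is injective, and closedness of $R(C_\varphi)$ is equivalent to $C_\varphi$ being bounded below, i.e. $\|C_\varphi f\|_{\mathcal D}\ge c\|f\|_{\mathcal D}$ for some $c>0$ and all $f\in\mathcal D(\D)$. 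Second, since $z\in\mathcal D(\D)$ and $C_\varphi$ is bounded, $\varphi=C_\varphi(z)\in\mathcal D(\D)$, so by the change-of-variables formula $\int_{\varphi(\D)}n_\varphi\,dA=\int_{\D}|\varphi'|^2\,dA<\infty$. This finiteness of the total mass of $n_\varphi\,dA$ is what neutralises a possibly unbounded counting function.

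Replacing $\varphi$ by $\bar\zeta\varphi$ (which changes neither the hypotheses nor the conclusion, since it amounts to composing $C_\varphi$ with a rotation unitary) we may assume $\zeta=1$, so $\varphi(\D)\subset\D\setminus S(1,r_0)=\{w\in\D:|w-1|\ge r_0\}$. As test functions I would take $h_\rho(z)=\log\frac1{1-\rho z}$ for $\rho\in(0,1)$; then $h_\rho(0)=0$, $h_\rho'(z)=\rho/(1-\rho z)$, and a direct computation (using $\int_\D |1-\rho z|^{-2}\,dA=\frac\pi{\rho^2}\log\frac1{1-\rho^2}$) gives $\|h_\rho\|_{\mathcal D}^2=\pi\log\frac1{1-\rho^2}$, so $f_\rho:=h_\rho/\sqrt{\pi\log\frac1{1-\rho^2}}$ has $\|f_\rho\|_{\mathcal D}=1$. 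The geometric point is that for $\rho\ge 1-r_0/2$ and $w\in\D$ with $|w-1|\ge r_0$ one has $|1-\rho w|\ge|1-w|-(1-\rho)\ge r_0/2$, whence $\sup_{w\in\D\setminus S(1,r_0)}|f_\rho'(w)|^2\le \frac4{\pi r_0^2\log\frac1{1-\rho^2}}=:\varepsilon_\rho\to 0$. Since $\varphi(\D)\subset\D\setminus S(1,r_0)$, this together with the finite-mass fact gives $\int_{\varphi(\D)}|f_\rho'|^2 n_\varphi\,dA\le\varepsilon_\rho\int_\D|\varphi'|^2\,dA\to 0$; and $|f_\rho(\varphi(0))|=|\log(1-\rho\varphi(0))|/\sqrt{\pi\log\frac1{1-\rho^2}}\to 0$ because $1-\rho\varphi(0)$ stays bounded away from $0$ (as $|\varphi(0)|<1$) while the denominator blows up. Plugging into $\|C_\varphi f_\rho\|_{\mathcal D}^2=|f_\rho(\varphi(0))|^2+\int_{\varphi(\D)}|f_\rho'|^2 n_\varphi\,dA$ yields $\|C_\varphi f_\rho\|_{\mathcal D}\to 0$ with $\|f_\rho\|_{\mathcal D}\equiv 1$, contradicting the bounded-below property. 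Finally, the equivalence with $\mathbb{T}\subset\overline{\varphi(\D)}$ is immediate, since $\zeta\notin\overline{\varphi(\D)}$ if and only if some $S(\zeta,r)$ misses $\varphi(\D)$.

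The only genuinely delicate point is the counting function: since $n_\varphi$ may blow up near $\mathbb{T}$, one cannot simply assert that $f_\rho'$ being small on $\varphi(\D)$ makes $\int_{\varphi(\D)}|f_\rho'|^2 n_\varphi\,dA$ small. This is exactly where the reduction $\varphi\in\mathcal D(\D)$ — i.e. $n_\varphi\,dA$ has finite total mass — enters, so that a \emph{uniform} pointwise bound $\varepsilon_\rho$ on $|f_\rho'|^2$ over $\varphi(\D)$ suffices. A minor secondary point is the degenerate case of a constant symbol, for which $C_\varphi$ is not injective and the statement fails; such symbols are excluded from the outset and are not normally admitted as composition-operator symbols anyway.
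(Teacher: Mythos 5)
Your proof is correct. The overall strategy coincides with the paper's: assume the range is closed so that $C_\varphi$ (being injective) is bounded below, pick a normalized family of test functions whose Dirichlet energy concentrates at the omitted boundary point $\zeta$, and use the finite total mass of $n_\varphi\,dA$ (a consequence of boundedness of $C_\varphi$) to conclude $\|C_\varphi f\|/\|f\|\to 0$, a contradiction. Where you differ is in the choice of test functions and the way smallness is quantified: the paper takes powers $f_k=\bigl(\tfrac{1+\bar\zeta z}{2}\bigr)^k$ of the peak function at $\zeta$ and argues via $k$-th roots that $\int_{\varphi(\D)}|f_k'|^2 n_\varphi\,dA$ decays geometrically relative to $\int_\D|f_k'|^2\,dA$, whereas you use the normalized logarithmic kernels $f_\rho=\log\tfrac{1}{1-\rho z}\big/\sqrt{\pi\log\tfrac{1}{1-\rho^2}}$ and exploit the explicit norm computation together with the uniform bound $|h_\rho'|\le 2/r_0$ on $\D\setminus S(1,r_0)$. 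Your version is more quantitative and avoids the slightly delicate root-test limit the paper asserts; the paper's version avoids computing any norms exactly. You are also more careful than the paper on one point: you observe explicitly that the statement fails for constant symbols (for which the range is the closed subspace of constants but $\mathbb{T}\not\subset\overline{\varphi(\D)}$), whereas the paper silently assumes $\ker C_\varphi=\{0\}$, which already presupposes $\varphi$ non-constant.
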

  \begin{proof} Assume $R( C_{\varphi })$ is closed, since $kerC_{\varphi }=\{0\},$ we know that there is a constant $c>0$ such that  
 $$
\| C_{\varphi }f\|\geq c\|f\|, \hskip 5mm \forall f\in \mathcal D(\D).\hskip 20mm (***)
$$
Thus
\begin{eqnarray*}
\| C_{\varphi }f\|^2&=&\int_{\D}|(f\circ \varphi )'(z)|^2 dA(z)+|f\circ \varphi(0)|^2\\
&=&\int_{\varphi (\D)}|f'(w)|^2 n_{\varphi }(w)dA(w)+|f\circ \varphi(0)|^2\\
&\geq &c^2[ \int_{\D}|f'(z)|^2 dA(z)+|f(0)|^2].
\end{eqnarray*}
 If  there is a $\zeta \in  \mathbb{T}$ and  $r >0$ such that
 $$
 S(\zeta ,r)\cap \varphi (\D)= \phi ,
 $$
  let
$$
f_{\zeta }(z)=\frac{1+\bar{\zeta}\cdot z}{2}, \hskip 5mm f_{k}(z)=f_{\zeta }^{k}(z),
$$
then $f_{\zeta }(z)$ is the peak function at $\zeta $ on $\D.$ For arbitrary open neighborhood $U$ of $\zeta ,$ it is easy to see that
\begin{eqnarray*}
&&lim_{k\rightarrow \infty }\sqrt[k]{\frac{\int_{\D-U}|f'_{k}(z)|^2dA(z)}{\int_{\D}|f'_{k}(z)|^2dA(z)}}\\
&=&\frac{max|f_{\zeta}|_{\D-U}|}{max|f_{\zeta}|_{\D}|}=max|f_{\zeta}|_{\D-U}|<1.
\end{eqnarray*}
Hence 
$$
\frac{\int_{\D-U}|f'_{k}(z)|^2dA(z)}{\int_{\D}|f'_{k}(z)|^2dA(z)}\rightarrow 0 \hskip 5mm\mbox{as}\hskip 5mm k\rightarrow \infty . 
$$
Direct calculation gives that
\begin{eqnarray*}
\frac{\| C_{\varphi }f_{k}\|^2}{\|f_{k}\|^2}&=&\frac{\int_{\D}|(f_{k}\circ \varphi )'(z)|^2 dA(z)+|f_{k}\circ \varphi(0)|^2}{\int_{\D}|f'_{k}(z)|^2dA(z)+|f_{k}(0)|^2}\\
&=&\frac{\int_{\varphi (\D)}|f'_{k}(w)|^2 n_{\varphi }(w)dA(w)+|f_{k}\circ \varphi(0)|^2}{\int_{\D}|f'_{k}(z)|^2dA(z)+|f_{k}(0)|^2}.
\end{eqnarray*}
 By the boundedness of $C_{\varphi },$   there exists a positive constant $M$ such that   
$$
\int_{\D}|f'(w)|^2 n_{\varphi }(w)dA(w)\leq M \int_{\D}|f'(w)|^2 dA(w), \hskip 5mm \forall f\in \mathcal{D}(\D).
$$ 
In particular,
$$
\int_{\D} n_{\varphi }(w)dA(w)\leq M \int_{\D}dA(w)=MA(\D).
$$
Thus
$$
lim_{k\rightarrow \infty }\sqrt[k]{\int_{\varphi(\D)}|f'_{k}(z)|^2n_{\varphi}(z)dA(z)}=max|f_{\zeta}|_{\varphi(\D)}|.
$$

%\begin{eqnarray*}
%&&\int_{\varphi (\D)}|f'_{k}(w)|^2 n_{\varphi }(w)dA(w)+|f_{k}\circ \varphi(0)|^2\\
%&=&\int_{\D}|f'_{k}\chi_{\varphi(\D)}(w)|^2 n_{\varphi }(w)dA(w)+|f_{k}\circ \varphi(0)|^2\\
%&\leq &M\int_{\D}|f'_{k}\chi_{\varphi(\D)}(w)|^2 dA(w)+|f_{k}\circ \varphi(0)|^2\\
%&=&M\int_{\varphi(\D)}|f'_{k}(w)|^2 dA(w)+|f_{k}\circ \varphi(0)|^2,
%\end{eqnarray*}
%where $\chi_{\varphi(\D)}$ is the characteristic function of $\varphi(\D).$
%Further, 
%\begin{eqnarray*}
%\frac{\| C_{\varphi }f_{k}\|^2}{\|f_{k}\|^2}&=&\frac{\int_{\D}|(f_{k}\circ \varphi )'(z)|^2 dA(z)+|f_{k}\circ \varphi(0)|^2}{\int_{\D}|f'_{k}(z)|^2dA(z)+|f_{k}(0)|^2}\\
%&\leq &M\frac{\int_{\varphi (\D)}|f'_{k}(w)|^2 dA(w)+|f_{k}\circ \varphi(0)|^2}{\int_{\D}|f'_{k}(z)|^2dA(z)+|f_{k}(0)|^2},
%\end{eqnarray*}

Since there is a neighborhood $U$ of $\zeta $ such that $U\cap \overline{\varphi (\D)}=\phi ,$ we have  $ \overline{\varphi (\D)}\subset \overline{\D}-U.$ Further,
\begin{eqnarray*}
&&lim_{k\rightarrow \infty }\sqrt[k]{\frac{\int_{\varphi(\D)}|f'_{k}(z)|^2n_{\varphi}(z)dA(z)}{\int_{\D}|f'_{k}(z)|^2dA(z)}}\\
&=&\frac{max|f_{\zeta}|_{\varphi(\D)}|}{max|f_{\zeta}|_{\D}|}=max|f_{\zeta}|_{\varphi(\D)}|<1.
\end{eqnarray*}
With the fact that
$
f_{k}(0)\rightarrow 0$ and $f_{k}\circ \varphi(0)\rightarrow 0,
$
we get
$$
\frac{\| C_{\varphi }f_{k}\|^2}{\|f_{k}\|^2}=\frac{\int_{\varphi (\D)}|f'_{k}(w)|^2n_{\varphi}(z) dA(w)+|f_{k}\circ \varphi(0)|^2}{\int_{\D}|f'_{k}(z)|^2dA(z)+|f_{k}(0)|^2}\rightarrow 0,\hskip3mm \mbox{as}\hskip 3mm k\rightarrow \infty .
$$
This contradicts to $(***).$ It shows that 
$$
 S(\zeta ,r )\cap \varphi (\D)\neq \phi ,
 $$
 which implies $\mathbb{T} \subset \overline{\varphi (\D)}.$
\end{proof}
Let $\alpha>-1$ and $1\leq p<\infty .$ We say that $G$, a Borel subset of $\D,$ satisfies
the reverse Carleson condition on $A_{\alpha}^{p},$  the weighted Bergman space, if there exists positive constant $\eta $ such
that  
$$
\eta \int_{G} |f(z)|^p(1-|z|^2)^{\alpha }dA(z) \geq \int_{\D}|f(z)|^p(1- |z|^2)^{\alpha }dA(z), \hskip4mm \forall f \in  A_{\alpha}^{p}.
$$
 \begin{thmk} (\cite {KK, L2}) Let $G\subset \D$ be a Borel subset of $\D.$ Then
 $ G$ satisfies the reverse Carleson condition if and only if the following condition holds:\\
There is a $\delta >0$ and $r>0$ such that 
$$
A(G\cap D(z, r)) > \delta A(D(z, r)) \hskip 5mm \mbox{for each} \hskip 5mm z \in \D,
$$ 
or equivalently, there is a $\delta >0$ and $\eta \in (0, 1)$ such that 
$$
A(G\cap D_{\eta} (z)) > \delta A(D_{\eta }(z)) \hskip 5mm \mbox{for each} \hskip 5mm z \in \D.
$$ 
\end{thmk}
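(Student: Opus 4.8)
The statement is a geometric characterization of reverse Carleson (dominating) sets for the weighted Bergman space $A_\alpha^p$, and the plan is to prove the equivalence by establishing both implications after first collapsing the two displayed density conditions into one. Since $D(z,r)=D_\eta(z)$ with $\eta=(e^{2r}-1)/(e^{2r}+1)$, as recorded earlier in the paper, the two formulations are the same statement under a change of parameter, so I would fix the Bergman-metric balls $D(z,r)$ and prove: the reverse Carleson inequality holds for $A_\alpha^p$ if and only if $A(G\cap D(z,r))>\delta A(D(z,r))$ for all $z\in\D$.

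\textbf{Density $\Rightarrow$ reverse Carleson.} Assume $A(G\cap D(z,r))>\delta A(D(z,r))$ for all $z$. The key analytic input is a Luecking-type lemma: for a nonnegative subharmonic $u$ on a disk $B$ and a measurable $E\subset B$ with $A(E)/A(B)\geq\delta$, one has $\int_E u\,dA\geq c(\delta)\int_B u\,dA$ with $c(\delta)>0$ depending only on $\delta$. I would apply this with $u=|f|^p$, which is subharmonic since $p\geq1$ and $f$ is analytic, on Euclidean disks comparable to the balls $D(z,r)$. To globalize, fix a lattice $\{a_j\}$ that is $r$-separated in the Bergman metric, whose balls $D(a_j,r)$ cover $\D$ and whose dilates $D(a_j,2r)$ have bounded overlap. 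On each $D(a_j,r)$ the density hypothesis gives $\int_{G\cap D(a_j,r)}|f|^p\,dA\gtrsim\int_{D(a_j,r)}|f|^p\,dA$, and since $(1-|z|^2)^\alpha$ is comparable to the constant $(1-|a_j|^2)^\alpha$ across $D(a_j,r)$, the same comparison passes to the weighted integrals. Summing over $j$ with the covering on the right and the bounded overlap on the left yields
$$\int_G|f|^p(1-|z|^2)^\alpha\,dA\gtrsim\int_\D|f|^p(1-|z|^2)^\alpha\,dA,$$
which is precisely the reverse Carleson inequality.

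\textbf{Reverse Carleson $\Rightarrow$ density.} I would argue by contraposition. If the density condition fails for the fixed $r$, there is a sequence $z_j\in\D$ with $A(G\cap D(z_j,r))/A(D(z_j,r))\to0$. Test the reverse inequality against the normalized kernels
$$f_j(w)=\frac{(1-|z_j|^2)^{s}}{(1-\overline{z_j}w)^{t}},$$
with exponents satisfying $t-s=(\alpha+2)/p$ and $pt>\alpha+2$, so that the standard estimate $\int_\D(1-|w|^2)^\alpha|1-\overline{z_j}w|^{-pt}\,dA\sim(1-|z_j|^2)^{\alpha+2-pt}$ gives $\|f_j\|_{A_\alpha^p}\sim1$ uniformly in $j$. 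The weighted mass $|f_j|^p(1-|w|^2)^\alpha$ concentrates near $z_j$: on $D(z_j,r)$ the integrand is comparable to $(1-|z_j|^2)^{-2}$ against area $\sim(1-|z_j|^2)^2$, contributing order one, while $\int_{\D\setminus D(z_j,R)}|f_j|^p(1-|w|^2)^\alpha\,dA$ is uniformly small for $R$ large. On $D(z_j,R)$ the weighted measure is comparable to normalized area, so the portion lying over $G$ is controlled by $A(G\cap D(z_j,R))/A(D(z_j,R))$, which tends to $0$ (the passage from radius $r$ to $R$ uses a covering of $D(z_j,R)$ by boundedly many $r$-balls together with the automorphism invariance of these relative densities). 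Hence $\int_G|f_j|^p(1-|w|^2)^\alpha\,dA\to0$ while the full norm stays $\sim1$, contradicting reverse Carleson.

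\textbf{Main obstacle.} The crux of the forward direction is the subharmonic domination lemma—that integrals over subsets of positive relative area dominate the full integral, uniformly over all Bergman balls; this is the technical heart and is exactly where analyticity enters through subharmonicity of $|f|^p$. In the converse, the delicate point is the concentration/tail estimate for the kernels: one must verify that the weighted mass outside a large Bergman ball is uniformly negligible and that the failure of density at radius $r$ genuinely forces it to fail at radius $R$, which I expect to be the most error-prone step and where the Möbius invariance of the relative area in Bergman disks must be used carefully.
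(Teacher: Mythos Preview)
The paper does not prove Theorem~KRL at all; it is quoted verbatim from \cite{KK, L2} and used as a black box, so there is no ``paper's own proof'' to compare against. Your two-step architecture (local density $\Rightarrow$ reverse Carleson via a covering, and the converse via normalized kernels) is indeed the route Luecking takes in \cite{L2}, so in that sense you are reconstructing the original argument rather than offering an alternative.

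That said, there is a genuine error in your key lemma. The statement ``for nonnegative subharmonic $u$ on a disk $B$ and measurable $E\subset B$ with $A(E)/A(B)\geq\delta$ one has $\int_E u\,dA\geq c(\delta)\int_B u\,dA$'' is false. Take $B=\D$, $E=\{z\in\D:\re z<0\}$ (so $\delta=1/2$), and $u(z)=|e^{Nz}|^p=e^{Np\re z}$; as $N\to\infty$ the mass of $u$ concentrates near $z=1$ and $\int_E u/\int_B u\to 0$. Subharmonicity alone gives only a one-sided sub-mean-value inequality and cannot prevent this concentration. Luecking's actual lemma is a \emph{pointwise} estimate: if $f$ is holomorphic on $B$ and $A(E)\geq\delta A(B)$, then $|f(a)|^p\leq c(\delta,p)\,A(E)^{-1}\int_E|f|^p\,dA$, where $a$ is the center. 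The integral comparison you want then requires $f$ to be holomorphic on a strictly larger concentric disk, so that every point of the inner disk can serve as the center of a sub-disk on which the pointwise estimate applies; this is why the covering must use balls $D(a_j,r)$ with $f$ controlled on $D(a_j,2r)$, and the constant acquires a dependence on the ratio of radii. Your globalization step is fine once the lemma is corrected, but as written the local input does not hold.
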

  \begin{prop}
 Let $\varphi $ be an  analytic self-mapping of  $\D$  and  $C_{\varphi }$ be bounded on $\mathcal{D}(\D).$ Then the following statements are equivalent:
 \begin{enumerate}
  \item[{\bf(a)}] There is a $\delta >0$ and $r>0$ such that
 $$
 A(\varphi(\D)\cap D(z,r) )\geq \delta A(D(z,r))
$$
 for all  $z\in \D$.
  \item[{\bf(b)}] For arbitrary  $\alpha \in (0, 1)$, there is a   $\delta >0$  and $r>0$ such that
  $$
  \int_{\varphi(\D)\cap D(z, r)}n_{\varphi}^{\alpha}(w)dA(w)\geq \delta A(D(z, r))
$$
 for all  $z\in  \D.$ 
 \item[{\bf(c)}] For arbitrary $\alpha \in (0, 1)$, there is a $\delta >0$  such that
 $$
 \int_{\varphi(\D)}|f'(w)|^2n_{\varphi}^{\alpha}(w)dA(w)\geq \delta \int_{\D}|f'(w)|^2dA(w)
$$
 for all  $f\in \mathcal{D}(\D).$
  \end{enumerate}
 \end{prop}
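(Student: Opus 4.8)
The plan is to establish the cycle $(a)\Rightarrow(c)\Rightarrow(b)\Rightarrow(a)$ together with the trivial implication $(a)\Rightarrow(b)$. Throughout, the boundedness of $C_\varphi$ is used only in the following form: by the change-of-variables formula already invoked above, $\|C_\varphi f\|^2=|f(\varphi(0))|^2+\int_\D|f'(w)|^2 n_\varphi(w)\,dA(w)$, and since $f\mapsto f'$ maps $\mathcal{D}(\D)$ onto the Bergman space $A^2=\{g\ \text{analytic on}\ \D:\int_\D|g|^2\,dA<\infty\}$, boundedness of $C_\varphi$ amounts to the statement that $\int_\D|g|^2 n_\varphi\,dA\le M\int_\D|g|^2\,dA$ for all $g\in A^2$; that is, $n_\varphi\,dA$ is a Carleson measure for $A^2$. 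Because $n_\varphi\ge1$ on $\varphi(\D)$ and $n_\varphi=0$ off $\varphi(\D)$, for every $\alpha\in(0,1)$ the measure $n_\varphi^\alpha\,dA$ is pointwise dominated by $n_\varphi\,dA$ and hence is a Carleson measure for $A^2$ as well.

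The two implications out of $(a)$ are immediate. For $(a)\Rightarrow(b)$, since $n_\varphi^\alpha\ge1$ on $\varphi(\D)$ we have $\int_{\varphi(\D)\cap D(z,r)}n_\varphi^\alpha\,dA\ge A(\varphi(\D)\cap D(z,r))\ge\delta A(D(z,r))$, which holds for every $\alpha$. For $(a)\Rightarrow(c)$, note that $(a)$ is precisely the geometric density condition in Theorem KRL applied to the Borel set $G=\varphi(\D)$ (with $p=2$ and weight exponent $0$); hence $\varphi(\D)$ satisfies the reverse Carleson condition on $A^2$, i.e.\ $\eta\int_{\varphi(\D)}|g|^2\,dA\ge\int_\D|g|^2\,dA$ for all $g\in A^2$. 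Taking $g=f'$ and using $n_\varphi^\alpha\ge1$ on $\varphi(\D)$ gives $\int_{\varphi(\D)}|f'|^2 n_\varphi^\alpha\,dA\ge\int_{\varphi(\D)}|f'|^2\,dA\ge\eta^{-1}\int_\D|f'|^2\,dA$, which is $(c)$ with $\delta=\eta^{-1}$.

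For $(b)\Rightarrow(a)$, I would fix a single $\alpha\in(0,1)$ for which $(b)$ holds and apply H\"older's inequality with conjugate exponents $1/\alpha$ and $1/(1-\alpha)$ on $\varphi(\D)\cap D(z,r)$:
\[
\int_{\varphi(\D)\cap D(z,r)}n_\varphi^\alpha\,dA\le\Bigl(\int_{D(z,r)}n_\varphi\,dA\Bigr)^{\alpha}\,A\bigl(\varphi(\D)\cap D(z,r)\bigr)^{1-\alpha}.
\]
Since $n_\varphi\,dA$ is a Carleson measure, $\int_{D(z,r)}n_\varphi\,dA\le C_r\,A(D(z,r))$ with $C_r$ independent of $z$, so $(b)$ gives $\delta A(D(z,r))\le C_r^{\alpha}A(D(z,r))^{\alpha}A(\varphi(\D)\cap D(z,r))^{1-\alpha}$; rearranging yields $A(\varphi(\D)\cap D(z,r))\ge(\delta\,C_r^{-\alpha})^{1/(1-\alpha)}A(D(z,r))$, which is $(a)$ with the same $r$ (and then, by $(a)\Rightarrow(b)$, condition $(b)$ in fact holds for all $\alpha$).

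The substantive implication is $(c)\Rightarrow(b)$, which I would obtain by testing against normalized Bergman reproducing kernels. Since $f\mapsto f'$ is onto $A^2$, condition $(c)$ is equivalent to $\int_{\varphi(\D)}|g|^2 n_\varphi^\alpha\,dA\ge\delta\int_\D|g|^2\,dA$ for all $g\in A^2$. Apply this to $g=k_z$, $k_z(w)=(1-|z|^2)/(1-\bar z w)^2$, noting that $\int_\D|k_z|^2\,dA$ is a positive constant independent of $z$ and that $|k_z(w)|^2\sim(1-|z|^2)^{-2}$ for $w\in D(z,r)$. Splitting the left-hand side over $D(z,r)$ and $\D\setminus D(z,r)$, the crux is that the tail $\int_{\D\setminus D(z,r)}|k_z|^2 n_\varphi^\alpha\,dA$ tends to $0$ as $r\to\infty$, uniformly in $z$: decomposing $\D$ into a lattice $\{D(w_j,1)\}$ of bounded overlap, the Carleson bound $\int_{D(w_j,1)}n_\varphi^\alpha\,dA\le C(1-|w_j|^2)^2$ together with $|k_z(w)|^2\sim(1-|z|^2)^2|1-\bar z w_j|^{-4}$ on $D(w_j,1)$ gives $\int_{D(w_j,1)}|k_z|^2 n_\varphi^\alpha\,dA\le C\sigma_j^2$, where $\sigma_j=(1-|z|^2)(1-|w_j|^2)|1-\bar z w_j|^{-2}=1-\rho(z,w_j)^2$; since $\sum_j\sigma_j^{3/2}\le C$ uniformly in $z$ (a routine consequence of the integral estimates in \cite{Z}) and $\sigma_j\le1-\tanh^2 r$ whenever $\beta(z,w_j)\ge r$, the tail is at most $C(1-\tanh^2 r)^{1/2}\to0$. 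Fixing $r$ so large that this tail does not exceed half of $\delta\int_\D|k_z|^2\,dA$, one gets $\int_{\varphi(\D)\cap D(z,r)}|k_z|^2 n_\varphi^\alpha\,dA\ge c>0$, and since $|k_z|^2\le C(1-|z|^2)^{-2}$ on $D(z,r)$ this forces $\int_{\varphi(\D)\cap D(z,r)}n_\varphi^\alpha\,dA\ge c'(1-|z|^2)^2\sim A(D(z,r))$, i.e.\ $(b)$. I expect this uniform decay of the tail of $\int|k_z|^2\,d(n_\varphi^\alpha\,dA)$ outside a large Bergman disc to be the only real obstacle; the remaining steps are a direct appeal to Theorem KRL and elementary inequalities.
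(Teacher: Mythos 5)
Your proof is correct, but the route through the nontrivial direction differs from the paper's. The paper proves $(a)\Leftrightarrow(c)$ by reducing both directions to the set-level reverse Carleson condition for $G=\varphi(\D)$ on the Bergman space and invoking Theorem KRL twice: for $(c)\Rightarrow(a)$ it truncates the counting function at a level $N$, bounds the high-multiplicity part by $\epsilon\|C_\varphi\|^2\int_\D|f'|^2\,dA$ using $n_\varphi^{\alpha-1}<\epsilon$ there, absorbs that term, and concludes $\int_{\varphi(\D)}|f'|^2\,dA\gtrsim\int_\D|f'|^2\,dA$; and for $(b)\Rightarrow(a)$ it runs the same truncation at the level of the local integrals $\int_{E_N}n_\varphi^\alpha\,dA\le N^{\alpha-1}\int_{D(z,r)}n_\varphi\,dA$. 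You instead close the cycle as $(c)\Rightarrow(b)\Rightarrow(a)$: your $(b)\Rightarrow(a)$ via H\"older with exponents $1/\alpha$, $1/(1-\alpha)$ and the Carleson bound $\int_{D(z,r)}n_\varphi\,dA\le C_rA(D(z,r))$ is cleaner and shorter than the paper's truncation; your $(c)\Rightarrow(b)$ tests against normalized Bergman kernels and needs the uniform decay of $\int_{\D\setminus D(z,r)}|k_z|^2n_\varphi^\alpha\,dA$, which you correctly obtain from the lattice estimate $\sum_j\sigma_j^{3/2}\le C$ (legitimate, since $n_\varphi^\alpha\le n_\varphi$ on $\varphi(\D)$ makes $n_\varphi^\alpha\,dA$ Carleson). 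The trade-off is that your kernel-testing step essentially reproves the ``function inequality implies geometric density'' half of Luecking's theorem for the specific measure $n_\varphi^\alpha\,dA$, so it is more technical than the paper's second appeal to Theorem KRL, but it is self-contained and makes the mechanism visible; both arguments use the boundedness of $C_\varphi$ in exactly the same way (as the Carleson property of $n_\varphi\,dA$).
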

\begin{proof} Note $n_{\varphi}^{\alpha}(w)\geq 1$ for any $w\in \varphi(\D)$ and $\alpha\in (0, 1),$ we have
$$
 \int_{\varphi(\D)}|f'(w)|^2n_{\varphi}^{\alpha}(w)dA(w)\geq \int_{\varphi (\D)}|f'(w)|^2dA(w)
 $$
for all $f\in  \mathcal{D}(\D).$  Hence $(a)\Longrightarrow(c)$ is obvious by Theorem KRL.

Conversely, for arbitrary $\alpha \in (0, 1)$ and $\epsilon >0,$ there is a $N\in \mathbb{N}$ such that 
$$
\frac{n^{\alpha }}{n}=\frac{1}{n^{1-\alpha}}<\epsilon 
$$ 
for all $n\geq N.$ Thus 
\begin{eqnarray*}
 &&\int_{\varphi(\D)\cap \{w: n_{\varphi }(w)>N\}}|f'(w)|^2n_{\varphi}^{\alpha}(w)dA(w)\\
 &=& \int_{\varphi(\D)\cap \{w: n_{\varphi }(w)>N\}}|f'(w)|^2\frac{n_{\varphi}^{\alpha}(w)}{n_{\varphi}(w)}n_{\varphi}(w)dA(w)\\
 &\leq &\epsilon  \int_{\varphi(\D)\cap \{w: n_{\varphi }(w)>N\}}|f'(w)|^2n_{\varphi}(w)dA(w)\\
 &\leq &\epsilon  \int_{\varphi(\D)}|f'(w)|^2n_{\varphi}(w)dA(w)\\
 &\leq &\epsilon  \|C_{\varphi }\|^{2}\cdot \int_{\D}|f'(w)|^2dA(w)
\end{eqnarray*}
and
\begin{eqnarray*}
 &&\int_{\varphi(\D)\cap \{w: n_{\varphi }(w)\leq N\}}|f'(w)|^2n_{\varphi}^{\alpha}(w)dA(w)\\
  &\leq &N^{\alpha }  \int_{\varphi(\D)\cap \{w: n_{\varphi }(w)\leq N\}}|f'(w)|^2dA(w)\\
 &\leq &N^{\alpha }  \int_{\varphi(\D)}|f'(w)|^2dA(w)
\end{eqnarray*}
for all $f\in  \mathcal{D}(\D).$
Further,
\begin{eqnarray*}
&& \int_{\varphi(\D)}|f'(w)|^2n_{\varphi}^{\alpha}(w)dA(w)\\
&\leq &\epsilon \|C_{\varphi }\|^{2}\cdot \int_{\D}|f'(w)|^2dA(w) +N^{\alpha }\int_{\varphi (\D)}|f'(w)|^2dA(w)
\end{eqnarray*}
for all $f\in  \mathcal{D}(\D).$
If there is a $\delta>0$ such that
 $$ 
 \delta \int_{\D}|f'(w)|^2dA(w)\leq  \int_{\varphi(\D)}|f'(w)|^2n_{\varphi}^{\alpha}(w)dA(w)
$$
for all $f\in  \mathcal{D}(\D),$ then
\begin{eqnarray*}
\delta \int_{\D}|f'(w)|^2dA(w)&\leq & \int_{\varphi(\D)}|f'(w)|^2n_{\varphi}^{\alpha}(w)dA(w)\\
 &\leq &\epsilon \|C_{\varphi }\|\cdot \int_{\D}|f'(w)|^2dA(w)\\
 &+&N^{\alpha }\int_{\varphi (\D)}|f'(w)|^2dA(w).
\end{eqnarray*}
Choose $\epsilon_0 >0$ such that 
$
\epsilon_0 \|C_{\varphi }\|^{2}<\frac{1}{2}\delta 
$
and  $N_{0}\in \mathbb{N}$ such that 
$
\frac{n^\alpha }{n}<\epsilon_0
$
for all $n\geq N_0.$ Then 
$$
\frac{\delta }{2N_{0}^{\alpha }}\int_{\D}|f'(w)|^2dA(w)\leq \int_{\varphi (\D)}|f'(w)|^2dA(w).
$$
By Theorem KRL again, we have  $(c)\Longrightarrow (a)$ .

$(a)\Longrightarrow (b)$ is obvious since $n_{\varphi }(w)\geq 1$ for all $w\in \varphi(\D).$  To prove $(b)\Longrightarrow (a),$ assume  for arbitrary $\alpha \in (0, 1)$, there is a   $\delta >0$  and $r>0$ such that
  $$
  \int_{\varphi(\D)\cap D(z, r)}n_{\varphi}^{\alpha}(w)dA(w)\geq \delta A(D(z, r))
$$
 for all  $z\in  \D.$ Write
 $E_N=\varphi(\D)\cap D(z,r)\cap \{w:n_{\varphi }(w)>N\}$ for any $N.$ Then for any $\alpha \in (0, 1),$
\begin{eqnarray*}
 \int_{E_N}n_{\varphi }^{\alpha }(w)dA(w)&\leq &\frac{1}{N^{1-\alpha }}\int_{E_N}n_{\varphi }dA(w)\\
 &\leq &\frac{1}{N^{1-\alpha }} \int_{D(z, r)}n_{\varphi }dA(w).
\end{eqnarray*}
 Since $C_{\varphi }$ is bounded on $ \mathcal{D}(\D),$ there is a $M>0$ such that 
 $$
 \int_{D(z, r)}n_{\varphi }dA(w)\leq MA(D(z, r))
 $$ (see \cite{L1}). Hence
 $$
  \int_{E_N}n_{\varphi }^{\alpha }(w)dA(w)\leq \frac{M}{N^{1-\alpha }}A(D(z, r)).
 $$
On the other hand, 
 \begin{eqnarray*}
  \int_{\varphi(\D)\cap D(z, r)-E_N}n_{\varphi }^{\alpha }(w)dA(w)&\leq&N^{\alpha }  \int_{\varphi(\D)\cap D(z, r)-E_N}dA(w)\\
  &\leq &N^{\alpha } A(\varphi(\D)\cap D(z, r)).
\end{eqnarray*}
Thus,
 \begin{eqnarray*}
  \int_{\varphi(\D)\cap D(z, r)}n_{\varphi }^{\alpha }(w)dA(w)&=&  \int_{E_N}n_{\varphi }^{\alpha }(w)dA(w)\\
  &+& \int_{\varphi(\D)\cap D(z, r)-E_N}n_{\varphi }^{\alpha }(w)dA(w)\\
  &\leq &\frac{M}{N^{1-\alpha }}A(D(z, r))\\
  &+&N^{\alpha }   A(\varphi(\D)\cap D(z, r)).
 \end{eqnarray*}
 Choose $N_0>0$ such that 
 $
 \frac{M}{N_{0}^{1-\alpha }}<\frac{\delta}{2},
 $  
 then with the inequality $(b)$ we have that
 $$
  A(\varphi(\D)\cap D(z, r))\geq \frac{\delta }{2N_{0}^{\alpha }}A(D(z, r)).
  $$
  Hence $(b)\Longrightarrow (a).$ We complete the proof.
\end{proof}
\begin{thm}Let $\varphi $ be an  analytic self-mapping of $\D.$  Assume that $C_{\varphi }$ is a bounded composition operator on $
 \mathcal D(\D).$  If for arbitrary  $\alpha \in (0, 1)$, there is a   $\delta >0$  and $r>0$ such that
  $$
  \int_{\varphi(\D)\cap D(z, r)}n_{\varphi}^{\alpha}(w)dA(w)\geq \delta A(D(z, r))
$$
 for all  $z\in  \D.$ Or equivalently, $n_{\varphi}^{\alpha}(w)dA(w)$ is a reverse Carleson measure, 
then  $R( C_{\varphi })$  is closed.
 \end{thm}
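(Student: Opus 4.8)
The plan is to show that $C_{\varphi}$ is bounded below, which for an injective operator is equivalent to having closed range. First note that the hypothesis forces $\varphi$ to be non-constant: by the immediately preceding proposition it is equivalent to statement (a) there, and a constant symbol would make $A(\varphi(\D)\cap D(z,r))=0$ while $A(D(z,r))>0$. Hence $\varphi(\D)$ is open and $\ker C_{\varphi}=\{0\}$, so it suffices to produce $c>0$ with $\|C_{\varphi}f\|\ge c\|f\|$ for all $f\in\mathcal{D}(\D)$. Throughout I would use the two identities
$$\|C_{\varphi}f\|^2=\int_{\varphi(\D)}|f'(w)|^2 n_{\varphi}(w)\,dA(w)+|f(\varphi(0))|^2,\qquad \|f\|^2=\int_{\D}|f'(w)|^2\,dA(w)+|f(0)|^2 .$$

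The second step is to convert the reverse Carleson hypothesis into a lower bound for the integral term of $\|C_{\varphi}f\|^2$. The hypothesis is precisely condition (b) of the preceding proposition, so condition (c) of that proposition holds as well; taking for instance $\alpha=\tfrac12$ gives a $\delta>0$ with $\int_{\varphi(\D)}|f'(w)|^2 n_{\varphi}^{1/2}(w)\,dA(w)\ge\delta\int_{\D}|f'(w)|^2\,dA(w)$ for all $f\in\mathcal{D}(\D)$. Since $n_{\varphi}(w)\ge 1$ for every $w\in\varphi(\D)$, we have $n_{\varphi}(w)\ge n_{\varphi}^{1/2}(w)$ on $\varphi(\D)$, and therefore
$$\int_{\varphi(\D)}|f'(w)|^2 n_{\varphi}(w)\,dA(w)\ge\delta\int_{\D}|f'(w)|^2\,dA(w),\qquad f\in\mathcal{D}(\D),$$
so that $\|C_{\varphi}f\|^2\ge\delta\int_{\D}|f'(w)|^2\,dA(w)+|f(\varphi(0))|^2$.

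It remains to absorb the point-evaluation term $|f(0)|^2$ appearing in $\|f\|^2$, and this is the only place where a genuine estimate is needed. The key sublemma I would establish is that $f\mapsto f(\varphi(0))-f(0)$ is bounded on $\mathcal{D}(\D)$ by the Dirichlet seminorm alone: writing $a=\varphi(0)$ and noting that $K(\cdot,a)-1$ vanishes at $0$, one has $f(a)-f(0)=\langle f,K(\cdot,a)-1\rangle=\int_{\D}f'(w)\,\overline{(K(\cdot,a)-1)'(w)}\,dA(w)$, so Cauchy--Schwarz yields $|f(a)-f(0)|\le C_0\big(\int_{\D}|f'(w)|^2\,dA(w)\big)^{1/2}$ with $C_0^2=\|K(\cdot,a)-1\|^2=K(a,a)-1=\tfrac1{\pi}\log\tfrac1{1-|a|^2}<\infty$ (alternatively one may bound $|f'|$ on the segment $[0,\varphi(0)]$ by the sub-mean-value property of $|f'|^2$). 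Then $|f(0)|^2\le 2|f(\varphi(0))|^2+2C_0^2\int_{\D}|f'(w)|^2\,dA(w)$, hence $\|f\|^2\le(1+2C_0^2)\int_{\D}|f'(w)|^2\,dA(w)+2|f(\varphi(0))|^2$; combining with the bound from the previous step gives $\|C_{\varphi}f\|^2\ge c^2\|f\|^2$ with $c^2=\min\{\delta/(1+2C_0^2),\,1/2\}>0$, and $R(C_{\varphi})$ is closed. The main obstacle is exactly this last comparison of $f(0)$ with $f(\varphi(0))$; the closed-range machinery itself is supplied entirely by the preceding proposition together with the trivial observation that $n_{\varphi}\ge n_{\varphi}^{\alpha}$ on $\varphi(\D)$.
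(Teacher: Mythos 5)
Your proof is correct, and while the first half follows the paper's route, the endgame is genuinely different. Like the paper, you reduce everything to the seminorm inequality $\int_{\varphi(\D)}|f'|^2 n_{\varphi}\,dA\ge\delta\int_{\D}|f'|^2\,dA$ via Proposition 2.2 together with $n_{\varphi}\ge 1$ on $\varphi(\D)$ (the paper passes through statement (a) and Theorem KRL to get $\int_{\varphi(\D)}|f'|^2\,dA\ge\tilde{\delta}\int_{\D}|f'|^2\,dA$; you pass through statement (c) with $\alpha=\tfrac12$ and use $n_{\varphi}\ge n_{\varphi}^{1/2}$ — these are interchangeable). Where you diverge is in handling the point-evaluation term: the paper argues by contradiction with a weakly convergent unit-norm sequence $g_m$ with $\|C_{\varphi}g_m\|\to 0$, deduces $g_m\xrightarrow{w}0$, hence $g_m(0)\to 0$ and $\int_{\D}|g_m'|^2\,dA\to 1$, and derives a contradiction. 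You instead prove directly that $C_{\varphi}$ is bounded below, using the identity $f(\varphi(0))-f(0)=\langle f,K(\cdot,\varphi(0))-1\rangle$ and the computation $\|K(\cdot,\varphi(0))-1\|^2=K(\varphi(0),\varphi(0))-1=\tfrac1{\pi}\log\tfrac1{1-|\varphi(0)|^2}$ to absorb $|f(0)|^2$ into the seminorm and $|f(\varphi(0))|^2$. Your argument is constructive and yields an explicit lower bound $c^2=\min\{\delta/(1+2C_0^2),\,1/2\}$, whereas the paper's compactness argument gives no constant; both are valid, and your kernel computation is a clean, self-contained replacement for the weak-convergence step.
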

\begin{proof}
 Since for arbitrary  $\alpha \in (0, 1)$, there is a   $\delta >0$  and $r>0$ such that
  $$
  \int_{\varphi(\D)\cap D(z, r)}n_{\varphi}^{\alpha}(w)dA(w)\geq \delta A(D(z, r))
$$
 for all  $z\in  \D,$ we know that there is a $\tilde{\delta} >0$ such that
 $$
  \int_{\varphi(\D)}|f'(w)|^2dA(w)\geq  \tilde{\delta}\int_{\D}|f'(w)|^2dA(w)
  $$
  for all $f\in \mathcal{D}(\D)$ by Proposition 2.2 and Theorem KRL.
 Thus,
  \begin{eqnarray*}
  \|C_{\varphi }f\|^2&=&|f(\varphi (0))|^2+\int_{\D}|(f\circ \varphi)'(z)|^2 dA(z)\\
 &=&|f(\varphi (0))|^2+\int_{\D}|f'(\varphi(z))|^2|\varphi'(z)|^2 dA(z)\\
  &=&|f(\varphi (0))|^2+\int_{\varphi (\D)}|f'(w)|^2n_{\varphi}(w) dA(w)\\
  &\geq &|f(\varphi (0))|^2+\int_{\varphi (\D)}|f'(w)|^2dA(w)\\
   &\geq &|f(\varphi (0))|^2+\tilde{\delta} \int_{\D}|f'(w)|^2dA(w).
   \end{eqnarray*}
It is easy to see that $R(C_{\varphi })$ has closed range. In fact, if $R(C_{\varphi})$ is not closed, 
 then there is a sequence $\{g_{m}\}\subset \mathcal D(\D)$ with $\|g_{m}\|=1$ such that 
$$
\|C_{\varphi }g_{m}\|\rightarrow 0\hskip 4mm \mbox{as}\hskip 4mm m\rightarrow  \infty .
$$
Without loss of generality, assume $g_{m}\xrightarrow{w}g.$ Then 
$$
C_{\varphi }g_{m}\xrightarrow{w}C_{\varphi }g.
$$ 
By
 $
\|C_{\varphi }g_{m}\|\rightarrow 0,$
we see that $C_{\varphi }g=0.$ Thus $g=0.$ That is, 
$
g_{m}\xrightarrow{w}0.
$ 
In particular, $g_{m}(0)\rightarrow 0 $ and $g_{m}(\varphi(0))\rightarrow 0.$ Further,
  \begin{eqnarray*}
 \|C_{\varphi }g_{m}\|^2
 &\geq &|g_{m}(\varphi (0))|^2+\tilde{\delta}\int_{\D}|g_{m}'(w)|^2dA(w)\\
  &\geq &\tilde{\delta}\int_{\D}|g_{m}'(w)|^2dA(w).
 \end{eqnarray*}
Note $\|g_{m}\|=1$ and $g_{m}(0)\rightarrow 0,$ we see that 
$$
\int_{\D}|g_{m}'(w)|^2dA(w)\rightarrow 1\hskip 5mm \mbox{as}\hskip 5mm m\rightarrow \infty .
$$ 
This contradicts to 
$
\|C_{\varphi }g_{m}\|\rightarrow 0
$
as
$m\rightarrow  \infty .
$
Hence, $R(C_{\varphi})$ is closed. This completes the proof. 
 \end{proof}
 \begin{remr}The operator $T$ on the Hilbert or Banach space $H$ is called semi-Fredholm operator if $R(T),$ the range of $T,$ is closed and at least one of $Ker  T$ and $Ker T^*$ is a finite dimensional space of $H.$ Since $Ker C_{\varphi }$ is always trivial, we see that $C_{\varphi }$ is semi-Fredholm operator if and only if $C_{\varphi }$ has closed range. \end{remr}
 \begin{cor}Let $\varphi $ be the  analytic function on $\D$ which maps  $\D$ into $\D.$  $C_{\varphi }$ is the bounded composition operator on $
 \mathcal D(\D).$  If for arbitrary  $\alpha \in (0, 1)$, there is a   $\delta >0$  and $r>0$ such that
  $$
  \int_{\varphi(\D)\cap D(z, r)}n_{\varphi}^{\alpha}(w)dA(w)\geq \delta A(D(z, r))
$$
 for all  $z\in  \D.$ Or equivalently, $n_{\varphi}^{\alpha}(w)dA(w)$ is a reverse Carleson measure, 
then $C_{\varphi }$  is a semi-Fredholm  operator.
 \end{cor}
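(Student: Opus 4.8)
The plan is to obtain this as an immediate consequence of Theorem 2.4 together with Remark 2.3. First, I would observe that the hypothesis imposed on $\varphi$ in the corollary coincides verbatim with the hypothesis of Theorem 2.4. Hence Theorem 2.4 applies directly and yields that $R(C_\varphi)$ is closed on $\mathcal{D}(\D)$. The parenthetical ``equivalently'' clause — that $n_\varphi^\alpha(w)\,dA(w)$ is a reverse Carleson measure — is justified by Theorem KRL, exactly as in the statement and proof of Proposition 2.2, so no additional argument is needed there.

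Second, I would record why $\ker C_\varphi = \{0\}$: if $f\in\mathcal{D}(\D)$ satisfies $C_\varphi f = f\circ\varphi = 0$, then $f$ vanishes on the nonempty open set $\varphi(\D)$, and therefore $f\equiv 0$ by the identity theorem for holomorphic functions. In particular $\ker C_\varphi$ is a finite-dimensional (indeed trivial) subspace of $\mathcal{D}(\D)$.

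Finally, invoking the definition of a semi-Fredholm operator recalled in Remark 2.3 — namely, that $R(T)$ is closed and at least one of $\ker T$, $\ker T^*$ is finite-dimensional — the two facts just established (closed range and trivial kernel) show that $C_\varphi$ is semi-Fredholm. I do not anticipate any genuine obstacle here: all of the substantive work is already contained in Theorem 2.4, which itself rests on Proposition 2.2 and Theorem KRL, so this corollary is essentially a formal repackaging of what has been proved.
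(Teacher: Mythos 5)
Your proposal is correct and matches the paper's (implicit) argument exactly: the corollary is stated without proof precisely because it follows immediately from the preceding theorem (closed range under the stated hypothesis) combined with the remark that $C_{\varphi}$, having trivial kernel by the identity theorem, is semi-Fredholm if and only if its range is closed. Nothing further is needed.
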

 \begin{remr}
Proposition 2.2 seems to mean that the condition in Theorem 2.3 is also necessary. We have the following conjecture
\begin{thmc}
Let $\varphi $ be an  analytic self-mapping of $\D.$  Assume that $C_{\varphi }$ is a bounded composition operator on $
 \mathcal D(\D).$ Then $R( C_{\varphi })$  is closed if and only if  for arbitrary  $\alpha \in (0, 1)$, there is a   $\delta >0$  and $r>0$ such that
  $$
  \int_{\varphi(\D)\cap D(z, r)}n_{\varphi}^{\alpha}(w)dA(w)\geq \delta A(D(z, r))
$$
 for all  $z\in  \D.$ Or equivalently, $n_{\varphi}^{\alpha}(w)dA(w)$ is a reverse Carleson measure.
 \end{thmc}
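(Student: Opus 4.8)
The plan is to prove the two implications separately and to use Proposition 2.2 to collapse the geometry onto a single condition on $\varphi(\D)$. The sufficiency is already available: if $n_\varphi^\alpha(w)\,dA(w)$ is a reverse Carleson measure for some (equivalently, every) $\alpha\in(0,1)$, then Theorem 2.3 gives that $R(C_\varphi)$ is closed. Hence the entire content of the conjecture is the necessity, and by Proposition 2.2 together with Theorem KRL this reduces to a single target: assuming $R(C_\varphi)$ closed, prove the unweighted lower estimate
$$\int_{\varphi(\D)}|f'(w)|^2\,dA(w)\geq\delta\int_\D|f'(w)|^2\,dA(w),\qquad f\in\mathcal D(\D),$$
which by Theorem KRL is equivalent to the area bound $A(\varphi(\D)\cap D(z,r))\geq\delta A(D(z,r))$ and hence, through Proposition 2.2, to conditions (a)--(c).

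Closed range and $\ker C_\varphi=\{0\}$ supply $\|C_\varphi f\|\geq c\|f\|$, that is
$$|f(\varphi(0))|^2+\int_{\varphi(\D)}|f'|^2 n_\varphi\,dA\geq c^2|f(0)|^2+c^2\int_\D|f'|^2\,dA.$$
Thus closed range already delivers the \emph{weighted} lower bound carrying the full multiplicity $n_\varphi$, and the task is to pass from the weight $n_\varphi\geq1$ to the weight $1$ while keeping a lower bound --- a strictly harder inequality, since lowering the weight lowers the left-hand side. I would attack the area bound by contraposition: if it fails, then for a fixed large $r$ there is a sequence $z_k\in\D$ with $A(\varphi(\D)\cap D(z_k,r))/A(D(z_k,r))\to0$, and I would build test functions violating $\|C_\varphi f\|\geq c\|f\|$.

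The natural choices are the normalized Bergman kernels, $f_k'(w)=(1-|z_k|^2)(1-\overline{z_k}w)^{-2}$ up to a constant, so that $\int_\D|f_k'|^2\,dA\asymp1$ while $\int_{\D\setminus D(z_k,r)}|f_k'|^2\,dA\to0$ as $r\to\infty$; the point evaluations $f_k(0)$ and $f_k(\varphi(0))$ are then lower order and tend to $0$. Splitting
$$\int_{\varphi(\D)}|f_k'|^2 n_\varphi\,dA=\int_{\varphi(\D)\cap D(z_k,r)}|f_k'|^2 n_\varphi\,dA+\int_{\varphi(\D)\setminus D(z_k,r)}|f_k'|^2 n_\varphi\,dA,$$
the tail term is controlled by the boundedness of $C_\varphi$ --- i.e. the Carleson estimate $\int_{\varphi(\D)}|g|^2 n_\varphi\,dA\leq M\int_\D|g|^2\,dA$ --- combined with the sub-mean-value property and the off-diagonal decay of $f_k'$, so it is small for large $r$. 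The scheme would then close, and the conjecture would follow, provided the local term were comparable to $A(\varphi(\D)\cap D(z_k,r))/A(D(z_k,r))$; this is exactly what happens when $n_\varphi$ is bounded, recovering the equivalence noted in the introduction.

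The hard part is the local term when $n_\varphi$ is unbounded. Since $|f_k'|^2\asymp A(D(z_k,r))^{-1}$ on $D(z_k,r)$, it is comparable to $A(D(z_k,r))^{-1}\int_{\varphi(\D)\cap D(z_k,r)}n_\varphi\,dA$, and a vanishing area $A(\varphi(\D)\cap D(z_k,r))$ does \emph{not} force $\int_{\varphi(\D)\cap D(z_k,r)}n_\varphi\,dA$ to vanish: the multiplicity may pile up enormous values of $n_\varphi$ on a set of arbitrarily small area. This is precisely the mechanism behind Luecking's example, and it is the sole obstruction. I expect the decisive step to be proving that closed range alone excludes such concentration --- concretely, that $R(C_\varphi)$ closed already forces the uniform integrability condition $\lim_{k\to\infty}\sup_{f\in(\mathcal D(\D))_1}\int_{\{n_\varphi>k\}}|f'|^2 n_\varphi\,dA=0$, after which the Main Theorem yields conditions (a)--(c) and hence the reverse Carleson property of $n_\varphi^\alpha\,dA$. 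Establishing this implication from closed range, without assuming the uniform integrability a priori, is the crux and the step I anticipate to be genuinely difficult.
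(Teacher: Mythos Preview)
The statement you are attempting to prove is explicitly presented as a \emph{Conjecture} in the paper (inside the remark following Corollary~2.4). The paper does \emph{not} prove it; only the sufficiency direction is established (Theorem~2.3), and the Main Theorem supplies the full equivalence only under the additional uniform integrability hypothesis
\[
\lim_{k\to\infty}\sup_{f\in(\mathcal D(\D))_1}\int_{\{n_\varphi>k\}}|f'|^2\, n_\varphi\,dA=0.
\]
There is therefore no proof in the paper to compare against for the necessity direction.

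Your proposal correctly isolates the sufficiency as Theorem~2.3 and correctly reduces the necessity, via Proposition~2.2 and Theorem~KRL, to the unweighted lower bound $\int_{\varphi(\D)}|f'|^2\,dA\geq\delta\int_\D|f'|^2\,dA$. Your contrapositive strategy with Bergman-kernel test functions is natural and would indeed recover Corollary~2.5 when $n_\varphi$ is bounded. But you yourself identify the obstruction: when $n_\varphi$ is unbounded, multiplicity can concentrate on sets of vanishing area, so the local term $\int_{\varphi(\D)\cap D(z_k,r)}n_\varphi\,dA$ need not vanish with $A(\varphi(\D)\cap D(z_k,r))$. Your closing paragraph names the missing step exactly --- deducing the uniform integrability condition from closed range alone --- and concedes it is ``genuinely difficult.'' That is precisely the open problem: the paper does not claim to resolve it, and your proposal does not resolve it either. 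What you have written is an accurate diagnosis of where the difficulty lies, not a proof of the conjecture.
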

\end{remr}

If the counting function $n_{\varphi }$ of the mapping $\varphi $  is unbounded, as long as the integrals, relative to the Carleson measure of  functions   in the unit sphere of the Dirichlet space,  are equally absolute continuous, then the reverse Carleson condition can ensure that the composition operator has  closed range. That is, we have MainTheorem.
%\begin{thm}Let $\varphi $ be the  analytic function on $\D$ which maps  $\D$ into $\D.$  If
%$$
%lim_{k\rightarrow \infty }sup_{f\in (\mathcal{D}(\D))_1}\int_{\varphi(\D)\cap \{w\in \D:n_{\varphi }(w)>k\}}|f'(w)|^2n_{\varphi }(w)dA(w)=0,
%$$
%where $(\mathcal{D}(\D))_1$ denotes the unit sphere of $\mathcal{D}(\D).$ Then the following statements are equivalent:
 %\begin{enumerate}
%\item[{\bf(a)}] 
 %$R( C_{\varphi })$  is closed on $\mathcal{D}(\D).$
  %\item[{\bf(b)}] There is a $\delta >0$ and $r>0$ such that
 %$$
  %A(\varphi(\D)\cap D(z,r) )\geq \delta A(D(z,r))
%$$
 %for all  $z\in \D$.
 %\item[{\bf(c)}]  $n_{\varphi }(w)dA(w)$ is a reverse Carleson measure.
  %\end{enumerate}
%\end{thm}
\begin{proof}[Proof of Main Theorem] By the condition in the main Theorem,  we see easily  that $C_{\varphi }$ is bounded on $\mathcal{D}(\D).$ $(a)\Longrightarrow (c)$ and $(b)\Longrightarrow (c)$ are obvious.  Also, Theorem KRL implies that  $(b)\Longrightarrow (a)$.  It remains to show $(c)\Longrightarrow (b).$ Assume  $(c)$ holds, that is,  there is a $\delta >0$ and $r>0$ such that 
$$
\int_{\varphi(\D)\cap D(z, r)}n_{\varphi }(w)dA(w)\geq \delta A(D(z, r))
$$
for all $z\in \D.$ We are to prove that  there is a $\delta >0$ and $r>0$ such that
 $$
  A(\varphi(\D)\cap D(z,r) )\geq \delta A(D(z,r))
$$
 for all  $z\in \D$. For any $z\in \D$ and fixed $r>0, $ write 
 $$
 k_{z}(w)=\frac{1-|z|^2 }{(1-\bar{z}w)^2},
 $$
  then by the equivalence of $1-|z|^2$ and $|1-\bar{z}w|$ on $D(z, r),$ together with assumption $(c),$ we get that there exist positive constants $M_1$ , $M_2$ such that
\begin{eqnarray*}
&&\delta \leq \frac{1}{A(D(z, r))}\int_{D(z, r)}n_{\varphi }(w)dA(w)\\
&\leq &M_1\int_{D(z, r)}|k_z(w)|^2n_{\varphi }(w)dA(w)\\
&=&M_1[\int_{D(z, r)\cap \{w\in \D:n_{\varphi }(w)>k\}}|k_z(w)|^2n_{\varphi }(w)dA(w)\\
&+&\int_{D(z, r)\cap \{w\in \D:n_{\varphi }(w)\leq k\}}|k_z(w)|^2n_{\varphi }(w)dA(w)]\\
&\leq &M_1[sup_{f\in (\mathcal{D}(\D))_1}\int_{\varphi(\D)\cap \{w\in \D:n_{\varphi }(w)>k\}}|f'(w)|^2n_{\varphi }(w)dA(w)\\
&+& \int_{D(z, r)\cap \{w\in \D:n_{\varphi }(w)\leq k\}}|k_z(w)|^2n_{\varphi }(w)dA(w)]\\
& \leq &M_1sup_{f\in (\mathcal{D}(\D))_1}\int_{\varphi(\D)\cap \{w\in \D:n_{\varphi }(w)>k\}}|f'(w)|^2n_{\varphi }(w)dA(w)\\
&+&\frac{M_2}{A(D(z, r))}\int_{D(z, r)\cap \{w\in \D:n_{\varphi }(w)\leq k\}}n_{\varphi }(w)dA(w).
%&\leq &\frac{1}{A(D(z, r))}sup_{f\in (\mathcal{D}(\D))_1}\int_{\varphi(\D)\cap \{w\in \D:n_{\varphi }(w)>k\}}|f'(w)|^2n_{\varphi }(w)dA(w)\\
%&+ &\frac{k}{A(D(z, r))}\int_{D(z, r)\cap \{w\in \D:n_{\varphi }(w)\leq k\}}dA(w).
\end{eqnarray*}
Note that
$$
lim_{k\rightarrow \infty }sup_{f\in (\mathcal{D}(\D))_1}\int_{\varphi(\D)\cap \{w\in \D:n_{\varphi }(w)>k\}}|f'(w)|^2n_{\varphi }(w)dA(w)=0,
$$
then for $\epsilon =\frac{\delta }{2(M_1 +1)},$ there is a $k_0>0$ such that 
$$
sup_{f\in (\mathcal{D}(\D))_1}\int_{\varphi(\D)\cap \{w\in \D:n_{\varphi }(w)>k\}}|f'(w)|^2n_{\varphi }(w)dA(w)<\epsilon 
$$
for all $k\geq k_0.$ This makes that
\begin{eqnarray*}
&&M_1sup_{f\in (\mathcal{D}(\D))_1}\int_{\varphi(\D)\cap \{w\in \D:n_{\varphi }(w)>k_0\}}|f'(w)|^2n_{\varphi }(w)dA(w)\\
&+&\frac{M_2}{A(D(z, r))}\int_{D(z, r)\cap \{w\in \D:n_{\varphi }(w)\leq k_0\}}n_{\varphi }(w)dA(w)\\
&\leq &M_1\epsilon +\frac{k_0M_2}{A(D(z, r))}\int_{\varphi (\D)\cap D(z, r)}dA(w)\\
&\leq &\frac{\delta }{2}+\frac{k_0M_2}{A(D(z, r))}A(\varphi (\D)\cap D(z, r)).
\end{eqnarray*}
Hence,
$$
A(\varphi (\D)\cap D(z, r))=\int_{\varphi (\D)\cap D(z, r)}dA(w)\geq \frac{\delta }{2k_0M_2}A(D(z, r)).
$$
That is, $(c)\Longrightarrow (b).$  The proof is thus  complete.
\end{proof}
%\begin{remr} Intuitively,   if  $\varphi $  is an analytic self-mapping of $\D$ with $A(\varphi (\D))<A(\overline{\varphi(\D)}),$
 %it seems that there should be 
 %$$
%lim_{k\rightarrow \infty }sup_{f\in (\mathcal{D}(\D))_1}\int_{\varphi(\D)\cap \{w\in \D:n_{\varphi }(w)>k\}}|f'(w)|^2n_{\varphi }(w)dA(w)=0
%$$
% but that is not the case.  As the counterexample in \cite{L1}, there is an analytic self-mapping of $\D$ with $A(\varphi (\D))<A(\overline{\varphi(\D)})$ such that 
 %$n_{\varphi }(w)dA(w)$ is a reverse Carleson measure and $R(C_{\varphi })$ is not closed. Then it is impossible to get
  %$$
%lim_{k\rightarrow \infty }sup_{f\in (\mathcal{D}(\D))_1}\int_{\varphi(\D)\cap \{w\in \D:n_{\varphi }(w)>k\}}|f'(w)|^2n_{\varphi }(w)dA(w)=0,
%$$
%therwise,  $C_{\varphi }$ must have  closed  range by Theorem B.
%\end{remr}
\begin{cor}
 Let $\varphi $ be an  analytic self-mapping of $\D$  and $n_{\varphi }(w)$ be bounded on $\D.$ Then the following statements are equivalent:
 \begin{enumerate}
\item[{\bf(a)}] 
 $R( C_{\varphi })$  is closed on $\mathcal{D}(\D).$
  \item[{\bf(b)}] There is a $\delta >0$ and $r>0$ such that
 $$
  A(\varphi(\D)\cap D(z,r) )\geq \delta A(D(z,r))
$$
 for all  $z\in \D$.
 \item[{\bf(c)}]  $n_{\varphi }(w)dA(w)$ is a reverse Carleson measure.
  \end{enumerate}
 \end{cor}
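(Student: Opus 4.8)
The plan is to obtain this corollary as an immediate specialization of the Main Theorem, by observing that the standing hypothesis of that theorem becomes vacuous once $n_{\varphi}$ is bounded. First I would note that if $n_{\varphi}(w)\le N_{0}$ for all $w\in\varphi(\D)$, then the set $\{w\in\D:n_{\varphi}(w)>k\}$ is empty whenever $k\ge N_{0}$; hence for every such $k$ and every $f\in(\mathcal{D}(\D))_{1}$ the integral over $\varphi(\D)\cap\{w:n_{\varphi}(w)>k\}$ vanishes, so
$$
\lim_{k\to\infty}\ \sup_{f\in(\mathcal{D}(\D))_{1}}\int_{\varphi(\D)\cap\{w\in\D:n_{\varphi}(w)>k\}}|f'(w)|^{2}n_{\varphi}(w)\,dA(w)=0.
$$
Thus $\varphi$ satisfies the hypothesis of the Main Theorem, and the equivalence of (a), (b) and (c) follows at once.

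For a self-contained argument not invoking the Main Theorem one can proceed directly along a cycle $(a)\Rightarrow(c)\Rightarrow(b)\Rightarrow(a)$. The implication $(a)\Rightarrow(c)$ is general: closed range forces $\|C_{\varphi}f\|\ge c\|f\|$ (the kernel of $C_{\varphi}$ being trivial, as in Proposition 2.1), which after discarding the point-evaluation term yields $\int_{\varphi(\D)}|f'|^{2}n_{\varphi}\,dA\ge c'\int_{\D}|f'|^{2}\,dA$ and hence (c) by Theorem KRL. The implication $(c)\Rightarrow(b)$ is the elementary estimate already recorded in the introduction: boundedness $n_{\varphi}\le N_{0}$ gives $\delta A(D(z,r))\le\int_{\varphi(\D)\cap D(z,r)}n_{\varphi}\,dA\le N_{0}\,A(\varphi(\D)\cap D(z,r))$, so (b) holds with constant $\delta/N_{0}$. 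Finally $(b)\Rightarrow(a)$ follows by combining Proposition 2.2 — which turns (b) into the statement that $n_{\varphi}^{\alpha}\,dA$ is a reverse Carleson measure for every $\alpha\in(0,1)$, i.e. precisely the hypothesis of Theorem 2.3 — with Theorem 2.3.

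There is essentially no obstacle: the corollary is exactly the case of the Main Theorem in which the equi-absolute-continuity hypothesis is automatic, the bounded counting function making the ``tail'' integrals identically zero. The only point deserving a line of verification is that $C_{\varphi}$ is indeed bounded on $\mathcal{D}(\D)$ under the sole assumption that $n_{\varphi}$ is bounded; this is immediate from
$$
\|C_{\varphi}f\|^{2}=|f(\varphi(0))|^{2}+\int_{\varphi(\D)}|f'(w)|^{2}n_{\varphi}(w)\,dA(w)\le |f(\varphi(0))|^{2}+N_{0}\int_{\D}|f'(w)|^{2}\,dA(w),
$$
together with the boundedness of point evaluation at $\varphi(0)$ afforded by the reproducing kernel $K$.
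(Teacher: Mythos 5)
Your first paragraph is precisely the paper's proof: the paper likewise notes that boundedness of $n_{\varphi}$ makes $C_{\varphi}$ bounded and renders the tail condition of the Main Theorem vacuous (the sets $\{w: n_{\varphi}(w)>k\}$ being empty for large $k$), so the equivalence follows immediately. The supplementary cycle is not needed; the only caveat there is that in $(a)\Rightarrow(c)$ you invoke Theorem KRL, which as stated concerns sets rather than measures, so one should either first divide by the bound on $n_{\varphi}$ to reduce to the set $\varphi(\D)$ (which yields (b) directly) or appeal to Luecking's measure-theoretic necessity result cited in the introduction.
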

 \begin{proof}If $n_{\varphi}$ is bounded on $\D,$ then $C_{\varphi}$ is obviously bounded on  $\mathcal{D}(\D)$ and $n_{\varphi}$ satisfies the condition in  Theorem B. Hence $(a)\Longleftrightarrow (b)\Longleftrightarrow (c)$.
 \end{proof}

 \section{Composition operators with some special symbols}
If $\varphi $ is analytic on the closed unit disc, then we have the following
 \begin{prop}Let $H(\bar{\D})$ be the space of analytic functions on $\bar{\D},$ $\varphi \in H(\bar{\D})$ maps $\D$ into $\D.$   Then $R( C_{\varphi })=\{f\circ \varphi |f\in \mathcal D(\D)\}$, the range of $ C_{\varphi },$ is closed if and only if 
 $$
 \mathbb{T}\subset \varphi (\bar{\D}).
 $$
 \end{prop}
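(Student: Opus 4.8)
My plan is to treat the two implications separately; the forward one is essentially immediate from Section~2, while the reverse one rests on the observation that $\mathbb{T}\subseteq\varphi(\bar\D)$ together with analyticity of $\varphi$ across $\mathbb{T}$ forces $\varphi$ to be a finite Blaschke product.

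For ``$R(C_\varphi)$ closed $\Rightarrow\mathbb{T}\subseteq\varphi(\bar\D)$'': here $\varphi\in H(\bar\D)$ is necessarily nonconstant, and $n_\varphi$ is bounded (for instance, $\varphi(\mathbb{T})$ is a finite union of analytic arcs and $n_\varphi$ is constant on each of the finitely many components of $\D\setminus\varphi(\mathbb{T})$), so $C_\varphi$ is bounded and Proposition~2.1 gives $\mathbb{T}\subseteq\overline{\varphi(\D)}$. Since $\varphi$ is continuous on the compact set $\bar\D$, the image $\varphi(\bar\D)$ is compact, hence closed, and contains $\varphi(\D)$, so $\overline{\varphi(\D)}\subseteq\varphi(\bar\D)$ and the conclusion follows.

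For ``$\mathbb{T}\subseteq\varphi(\bar\D)\Rightarrow R(C_\varphi)$ closed'', the main step is to prove that $\varphi$ is a finite Blaschke product. Writing $\varphi(\bar\D)=\varphi(\D)\cup\varphi(\mathbb{T})$ and using that $\varphi(\D)\subseteq\D$ is disjoint from $\mathbb{T}$, the hypothesis forces $\mathbb{T}\subseteq\varphi(\mathbb{T})$; hence $E:=\{w\in\mathbb{T}:|\varphi(w)|=1\}$ satisfies $\varphi(E)=\mathbb{T}$ and therefore is infinite. Because $\varphi$ is analytic in a neighborhood of $\bar\D$, the map $t\mapsto 1-|\varphi(e^{it})|^2$ is real-analytic and $2\pi$-periodic; having an infinite zero set, it must vanish identically, i.e.\ $|\varphi|\equiv 1$ on $\mathbb{T}$. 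Combined with $\varphi(\D)\subseteq\D$ (so that $|\varphi|<1$ on $\D$ by the maximum principle, $\varphi$ being nonconstant), this says $\varphi$ is an inner function that extends analytically past $\mathbb{T}$; consequently $\varphi$ has only finitely many zeros, all lying in $\D$, and if $B$ denotes the finite Blaschke product with exactly those zeros then $\varphi/B$ is analytic on a neighborhood of $\bar\D$, zero-free on $\bar\D$, and unimodular on $\mathbb{T}$, so applying the maximum modulus principle to $\varphi/B$ and to $B/\varphi$ shows $|\varphi/B|\equiv 1$ on $\bar\D$; thus $\varphi/B$ is a unimodular constant and $\varphi$ is a finite Blaschke product.

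Once $\varphi$ is a finite Blaschke product it maps $\D$ onto $\D$ and $n_\varphi$ is bounded (equal to the degree of $\varphi$), so condition (b) of Corollary~2.5 holds trivially, since $\varphi(\D)\cap D(z,r)=D(z,r)$ for every $z\in\D$; hence Corollary~2.5 gives that $R(C_\varphi)$ is closed --- this is also the surjective-symbol case already noted in the introduction. The step I expect to be the crux is the rigidity at the start of the previous paragraph: $\mathbb{T}\subseteq\varphi(\bar\D)$ cannot be realized by a symbol that is not inner, because analyticity across $\mathbb{T}$ turns the constraint $|\varphi|=1$ on the infinite set $E$ into the identity $|\varphi|\equiv 1$ on all of $\mathbb{T}$; everything else is standard function theory together with the results of Section~2. (It is worth recording the harmless convention that $\varphi$ is taken nonconstant --- a constant symbol has closed range but never satisfies $\mathbb{T}\subseteq\varphi(\bar\D)$.)
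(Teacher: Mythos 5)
Your proof is correct, and while the forward implication matches the paper in spirit, the reverse implication takes a genuinely different route. For ``closed range $\Rightarrow\mathbb{T}\subseteq\varphi(\bar\D)$'' the paper simply re-runs the peak-function argument of Proposition 2.1; you instead cite that proposition and add the observation that $\varphi(\bar\D)$ is compact, hence $\overline{\varphi(\D)}\subseteq\varphi(\bar\D)$ --- a legitimate shortcut, modulo the boundedness of $n_\varphi$ for nonconstant $\varphi\in H(\bar\D)$, which you assert with a slightly hand-wavy parenthesis; the clean justification is that by Rouch\'e/Hurwitz the multiplicity-counting function $w\mapsto\#\{z\in\D:\varphi(z)=w\}$ is locally constant off $\varphi(\mathbb{T})$ and locally bounded everywhere, hence bounded on the compact set $\varphi(\bar\D)$. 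For the converse, the paper argues locally: each $\zeta\in\mathbb{T}$ lies in $\varphi(\bar\D)$, so $n_\varphi\geq 1$ on a neighbourhood of $\zeta$ intersected with $\D$; a finite subcover yields an annulus $\D\setminus\D_r$ on which $n_\varphi\geq 1$, and a normalized weakly null sequence with $\|C_\varphi g_k\|\to 0$ must concentrate its Dirichlet energy on that annulus, forcing $\|C_\varphi g_k\|^2\geq 1-\epsilon$, a contradiction. You instead prove a rigidity statement: $\mathbb{T}\subseteq\varphi(\bar\D)$ forces $\mathbb{T}\subseteq\varphi(\mathbb{T})$, the real-analytic function $t\mapsto 1-|\varphi(e^{it})|^2$ then has infinitely many zeros and so vanishes identically, and the standard factorization argument shows $\varphi$ is a finite Blaschke product; surjectivity of $\varphi$ onto $\D$ then gives $n_\varphi\geq 1$ everywhere and closed range follows from the computation already recorded in the introduction. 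Each step of this checks out. What your approach buys is structural insight: for $\varphi\in H(\bar\D)$ the hypothesis $\mathbb{T}\subseteq\varphi(\bar\D)$ is \emph{equivalent} to $\varphi$ being a finite Blaschke product, so the proposition is really a statement about finite Blaschke products in disguise --- something the paper's softer covering argument (which would survive under weaker regularity of $\varphi$ near $\mathbb{T}$) does not reveal.
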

 \begin{proof}
 Assume $R( C_{\varphi })$ is closed, since $kerC_{\varphi }=\{0\},$ we know that there is a constant $c>0$ such that for any $f\in \mathcal D(\D),$
 $$
\| C_{\varphi }f\|\geq c\|f\|.
$$
Thus
\begin{eqnarray*}
\| C_{\varphi }f\|^2&=&\int_{\D}|(f\circ \varphi )'(z)|^2 dA(z)\\
&=&\int_{\varphi (\D)}|f'(w)|^2 n_{\varphi }(w)dA(w)\\
&\geq &c^2 \int_{\D}|f'(z)|^2 dA(z).
\end{eqnarray*}
 Note $\varphi \in H(\overline{\D}),$ we see that $C_{\varphi }$ is bounded on $\mathcal D(\D)$ since $n_{\varphi}(w)$ is bounded on $\D.$ If $\mathbb{T}-\varphi (\bar{\D})\neq \phi ,$ choose $\zeta \in \mathbb{T}-\varphi (\bar{\D}).$ Let
$$
f_{\zeta }(z)=\frac{1+\bar{\zeta}\cdot z}{2}, \hskip 5mm f_{k}(z)=f_{\zeta }^{k}(z),
$$
then $f_{\zeta }(z)$ is the peak function at $\zeta $ on $\D.$ For any open neighborhood $U$ of $\zeta ,$ it is easy to see that
\begin{eqnarray*}
&&lim_{k\rightarrow \infty }\sqrt[k]{\frac{\int_{\D-U}|f'_{k}(z)|^2dA(z)}{\int_{\D}|f'_{k}(z)|^2dA(z)}}\\
&=&\frac{max|f_{\zeta}|_{\D-U}|}{max|f_{\zeta}|_{\D}|}=max|f_{\zeta}|_{\D-U}|<1.
\end{eqnarray*}
Hence 
$$
\frac{\int_{\D-U}|f'_{k}(z)|^2dA(z)}{\int_{\D}|f'_{k}(z)|^2dA(z)}\rightarrow 0. 
$$
Note
\begin{eqnarray*}
\frac{\| C_{\varphi }f_{k}\|^2}{\|f_{k}\|^2}&=&\frac{\int_{\D}|(f_{k}\circ \varphi )'(z)|^2 dA(z)}{\int_{\D}|f'_{k}(z)|^2dA(z)}\\
&=&\frac{\int_{\varphi (\D)}|f'_{k}(w)|^2 n_{\varphi }(w)dA(w)}{\int_{\D}|f'_{k}(z)|^2dA(z)}\\
&\leq &M\frac{\int_{\varphi (\D)}|f'_{k}(w)|^2 dA(w)}{\int_{\D}|f'_{k}(z)|^2dA(z)},
\end{eqnarray*}
and there is a neighborhood $U$ of $\zeta $ such that $U\cap \varphi (\bar{\D})=\phi ,$ thus $\varphi (\bar{\D})\subset \bar{\D}-U.$ Further,
$$
\frac{\| C_{\varphi }f_{k}\|^2}{\|f_{k}\|^2}\leq M\frac{\int_{\varphi (\D)}|f'_{k}(w)|^2 dA(w)}{\int_{\D}|f'_{k}(z)|^2dA(z)}\rightarrow 0.
$$
This contradicts to $ \| C_{\varphi }f\|\geq c\|f\|. $ It shows that $\mathbb{T}\subset \varphi(\bar{\D}).$

Conversely, assume $\mathbb{T}\subset \varphi(\bar{\D}),$ we are to prove that $R(C_{\varphi})$ is closed. Assume the contrary, $R(C_{\varphi})$ is not closed, then there is a sequence $\{g_{k}\}\subset \mathcal D(\D)$ with $\|g_{k}\|=1$ such that 
$
\|C_{\varphi }g_{k}\|\rightarrow 0.
$
Without loss of generality, assume $g_{k}\xrightarrow{w}g.$ Then 
$$
C_{\varphi }g_{k}\xrightarrow{w}C_{\varphi }g.
$$
 By
 $
\|C_{\varphi }g_{k}\|\rightarrow 0,
$
we see that $C_{\varphi }g=0.$ Thus $g=0.$ That is, $g_{k}\xrightarrow{w}0.$ Further, 
$
g_{k}\rightarrow 0
$ 
uniformly on any compact subset of $\D.$

Since $\varphi \in H(\bar{\D}),$ for any $\zeta \in \mathbb{T},$ there is an open  neighborhood $U(\zeta )$ such that $n_{\varphi }(w)\geq 1$   on $U(\zeta ).$ By finite covering thoreom, there are finte $\zeta_{i}, i=1,\cdots ,m$ such that 
$$
\mathbb{T}\subset \cup_{i=1}^{m}U(\zeta_{i}),
$$
 and $n_{\varphi }(w)\geq 1$  on $U(\zeta_{i} ).$ Note $\cup_{i=1}^{m}U(\zeta_{i})$ is open, we may choose $r\in (0,1)$ such that 
$$
\overline{\D-\D_{r}}\subset \cup_{i=1}^{m}U(\zeta_{i}).
$$ 
Clearly, $g_{k}\rightarrow 0$ uniformly on $\overline{\D_{r}},$ for any $\epsilon >0,$ there is a $k_{0}$ such that 
$$
\int_{\D_{r}}|g'_{k}|^{2}(z)dA(z)<\epsilon \hskip 5mm \mbox{for}\hskip 4mm k>k_{0}.
$$
Thus for $k>k_{0},$
\begin{eqnarray*}
\|C_{\varphi }g_{k}\|^2&=&\int_{\D}|(g_{k}\circ\varphi )'|^2(z)dA(z)\\
&=&\int_{\varphi(\D)}|g'_{k}|^2(w)n_{\varphi }(w)dA(w)\\
&=&\int_{\varphi(\D)-(\D-\D_{r})}|g'_{k}|^2(w)n_{\varphi }(w)dA(w)+\int_{\D-\D_{r}}|g'_{k}|^2(w)n_{\varphi }(w)dA(w)\\
&\geq &\int_{\D-\D_{r}}|g'_{k}|^2(w)n_{\varphi }(w)dA(w)\\
&\geq &\int_{\D-\D_{r}}|g'_{k}|^2(w)dA(w)\\
&=&\int_{\D}|g'_{k}|^2(w)dA(w)-\int_{\D_{r}}|g'_{k}|^2(w)dA(w)\\
&\geq &\int_{\D}|g'_{k}|^2(w)dA(w)-\epsilon \\
&=&\|g_{k}\|^2-\epsilon\\
&=&1-\epsilon .
\end{eqnarray*}
This contradicts to $
\|C_{\varphi }g_{k}\|\rightarrow 0.
$ It shows that  $R(C_{\varphi})$ must be closed.
 \end{proof}
 B. Hou and Ch.L. Jiang prove recently a similar result in the case of weighted Hardy space  of polynomial growth (see \cite{HJ}).  In general, $\mathbb{T}\subset \overline{\varphi(\D)}$ is not enough to ensure that $C_{\varphi }$ has a closed range on $\mathcal{D}(\D)$ even if $n_{\varphi } $ is bounded on $\D.$ The following example illustrates this conclusion.
\begin{exe}
As shown figure 1:
 \begin{figure}[h]
\centering
\includegraphics[height=50pt,width=130pt]{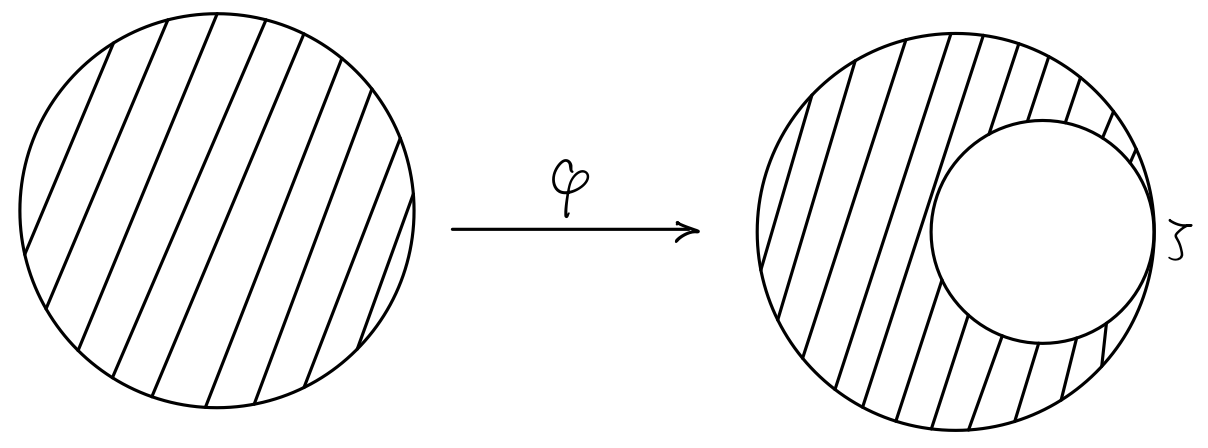}
\caption{}
\end{figure}\\
 Let $\zeta \in \mathbb{T} $ and $\D_{\zeta }$  is  the disk inside the unit disk and is tangent to $\mathbb{T}$  at point $\zeta .$ Write
$\Omega =\D-\overline{\D_{\zeta }},$ $\varphi $ is the Riemann map from $\D$ to $\Omega \subset \D.$  Then $C_{\varphi }$  is a contraction operator on $\mathcal{D}(\D)$ since $n_{\varphi }(w)\leq 1$ for all $w\in \D.$ By Corollary 2.6, we know that 
$R(C_{\varphi })$ is not closed,  although  $\mathbb{T}\subset \overline{\varphi(\D)}.$
\end{exe}
 
%Remark 1 implies the following
% \begin{thm1}Let $\varphi $ be the  analytic function on $\D$ which maps  $\D$ into $\D$ and $A(\varphi (\D))=A(\overline{\varphi(\D)}).$ $C_{\varphi }$ is the bounded composition operator on $
% \mathcal D(\D).$  Then $R(C_{\varphi })$  is closed if and only if $n_{\varphi }(w)$ is a reverse Carleson measure.
%  \end{thm1}
 
\bigskip
\bigskip
 \noindent
%{\bf Acknowledgments:}
 %G.F. Cao was supported by NNSF of China (Grant Number 12071155).
%L. He  was supported by NNSF of China (Grant Number  11871170). 

\end{document}